\numberwithin{equation}{section}
\newtheorem{theorem}{Theorem}[section]
\newtheorem{proposition}[theorem]{Proposition}
\newtheorem{definition}[theorem]{Definition}
\newtheorem{remark}[theorem]{Remark}
\newtheorem{lemma}[theorem]{Lemma}
\newtheorem{example}[theorem]{Example}
\newtheorem{corollary}[theorem]{Corollary}
\newcommand{\edge}{\ar@{-}}
\newcommand{\pf}{\noindent\begin {proof}}
\newcommand{\epf}{\end{proof}}
\newcommand{\Hom}{\mbox{\rm Hom}}
\def\Im{\mathop{\rm Im}\nolimits}
\def\Ker{\mathop{\rm Ker}\nolimits}
\def\mod{\mathop{\rm mod}\nolimits}
\def\id{\mathop{\rm id}\nolimits}
\def\pd{\mathop{\rm pd}\nolimits}
\def\max{\mathop{\rm max}\nolimits}
\def\min{\mathop{\rm min}\nolimits}
\def\sup{\mathop{\rm sup}\nolimits}
\def\inf{\mathop{\rm inf}\nolimits}
\def\add{\mathop{\rm add}\nolimits}
\def\gldim{\mathop{\rm gl.dim}\nolimits}
\def\rad{\mathop{{\rm rad}}\nolimits}
\def\soc{\mathop{{\rm soc}}\nolimits}
\def\top{\mathop{{\rm top}}\nolimits}
\def\dim{\mathop{\rm dim}\nolimits}
\def\Hom{\mathop{\rm Hom}\nolimits}
\def\sup{\mathop{\rm sup}\nolimits}
\def\lim{\mathop{\underrightarrow{\rm lim}}\nolimits}
\def\gen{\mathop{\rm gen}\nolimits}
\def\proj{\mathop{\rm proj}\nolimits}
\def\End{\mathop{\rm End}\nolimits}
\def\C{\mathop{\rm \mathcal{C}}\nolimits}
\def\S{\mathop{\rm \mathcal{S}}\nolimits}
\def\V{\mathop{\rm \mathcal{V}}\nolimits}
\def\LL{\mathop{\rm LL}\nolimits}
\def\gen.dim{\mathop{\rm gen.dim}\nolimits}
\title{ \bf An Upper Bound for the Dimension of Bounded Derived Categories
\thanks{2010 Mathematics Subject Classification: 18E30, 16E10, 16S90.}
\thanks{Keywords: Dimensions, Bounded derived categories, Upper bounds,
Projective dimension, Injective dimension, Radical layer length.}}
\author { \ Junling  Zheng$^{a,b}$, Zhaoyong Huang$^{b,}$\thanks{Email: zjlshuxue@163.com, huangzy@nju.edu.cn}
\\
{\it \scriptsize  $^a$Department of Mathematics, China Jiliang University, Hangzhou 310018, Zhejiang Province, P.R. China
}\\{\it \scriptsize  $^b$Department of Mathematics, Nanjing University, Nanjing 210093, Jiangsu Province, P.R. China }}
\date{ }
\begin{document}

\baselineskip=16pt


\maketitle

\begin{abstract}
Let $\Lambda$ be an artin algebra. We give an upper bound for the dimension of the bounded derived category of
the category $\mod \Lambda$ of finitely generated right $\Lambda$-modules in terms of the projective and injective dimensions
of certain class of simple right $\Lambda$-modules as well as the radical layer length of $\Lambda$.
In addition, we give an upper bound for the dimension of the singularity category of
$\mod \Lambda$ in terms of the radical layer length of $\Lambda$.
\end{abstract}

\pagestyle{myheadings}
\markboth{\rightline {\scriptsize  }}
         {\leftline{\scriptsize  An upper bound for the dimension of derived category }}

\section{Introduction} 

Given a triangulated category $\mathcal{T}$, Rouquier introduced in \cite{Rou} the dimension $\dim\mathcal{T}$ of $\mathcal{T}$
under the idea of Bondal and van den Bergh in \cite{BvdB03G}. This dimension and the infimum of the Orlov spectrum of $\mathcal{T}$
coincide, see \cite{BFK12O, Orl09R}. Roughly speaking, it is an invariant that measures how quickly the category can be built from one object.
Many authors have studied the upper bound of $\dim \mathcal{T}$, see \cite{BFK12O, BOJ15T, CYZ, H, KrK, OpS12G, Rou2, Rou} and so on.
There are a lot of triangulated categories having infinite dimension,
for instance, Oppermann and \v St'ov\'\i \v cek proved in \cite{OpS12G} that
all proper thick subcategories of the bounded derived category of finitely generated modules over a Noetherian algebra
containing perfect complexes have infinite dimension.

Let $\Lambda$ be an artin algebra. Let $\mod \Lambda$ be the category of finitely generated right $\Lambda$-modules
and let $D^{b}(\mod \Lambda)$ and $D_{sg}^{b}(\mod \Lambda)$ be the bounded derived category and singularity category of
$\mod \Lambda$ respectively. The upper bounds for the dimensions of these two categories can be given in terms of
the Loewy length $\LL(\Lambda)$ and the global dimension $\gldim\Lambda$ of $\Lambda$.

\begin{theorem} \label{thm1.1}
Let $\Lambda$ be an artin algebra. Then we have
\begin{enumerate}
\item[(1)] {\rm (\cite[Proposition 7.37]{Rou})} $\dim D^{b}(\mod \Lambda) \leqslant \LL(\Lambda)-1;$
\item[(2)] {\rm (\cite[Proposition 7.4]{Rou} and \cite[Proposition 2.6]{KrK})} $\dim D^{b}(\mod \Lambda) \leqslant \gldim \Lambda;$
\item[(3)] {\rm (\cite[Lemma 4.5]{BOJ15T})} $\dim D_{sg}^{b}(\mod \Lambda) \leqslant \LL(\Lambda)-2$.
\end{enumerate}
\end{theorem}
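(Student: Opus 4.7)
All three inequalities follow a common template: choose a classical generator $G$ of the relevant triangulated category and count the cones needed to build every object from $G$. The plan below recalls the standard strategies.

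For part (1), I would take $G := \Lambda/\rad\Lambda$. Writing $n := \LL(\Lambda)$, the identity $\rad^{n}\Lambda = 0$ gives every $M \in \mod \Lambda$ a radical filtration
\[
0 = M\rad^{n} \subseteq M\rad^{n-1} \subseteq \cdots \subseteq M\rad \subseteq M
\]
with $n$ semisimple subquotients, each of which lies in $\add G$. The short exact sequences become triangles in $D^{b}(\mod \Lambda)$, and splicing them together places $M$ in $\langle G\rangle_{n}$. Extending the same filtration termwise to a bounded complex transports the bound from $\mod \Lambda$ to $D^{b}(\mod \Lambda)$, giving $\dim D^{b}(\mod \Lambda) \leq n - 1$.

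For part (2), the natural generator is $G := \Lambda$. If $g := \gldim \Lambda < \infty$, then $D^{b}(\mod \Lambda) \simeq K^{b}(\proj \Lambda)$ and every module has a projective resolution of length at most $g$; gluing the $g + 1$ projective terms by $g$ successive cones places each module in $\langle \Lambda\rangle_{g+1}$. To extend the bound uniformly to arbitrary bounded complexes I would use smart truncation together with the finite global-dimension hypothesis to replace any $X \in D^{b}(\mod \Lambda)$ by a bounded complex of projectives whose effective length is controlled by $g$.

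For part (3), I would pass to the Verdier quotient $D^{b}_{sg}(\mod \Lambda) = D^{b}(\mod \Lambda)/\mathrm{Perf}(\Lambda)$. Since every projective module becomes zero there, the radical filtration used in (1) can be shortened by one layer: the outermost step of the filtration, which is built into a projective cover, is effectively absorbed by the vanishing of $\Lambda$ in the quotient. This saves exactly one cone and produces the bound $\LL(\Lambda) - 2$.

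The main obstacle across all three parts is the uniform passage from modules to arbitrary bounded complexes. For (1) and (3) the radical filtration is functorial and applies termwise, so the module-level count transfers immediately to $D^{b}$; for (2) the transfer depends on the homological rigidity provided by finite global dimension, and verifying that the number of cones needed is bounded by $\gldim \Lambda$ independently of the amplitude of $X$ is the step that needs the most care.
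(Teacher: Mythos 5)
The paper does not prove Theorem~\ref{thm1.1}; all three parts are cited verbatim from the literature (Rouquier; Krause--Kussin; Bergh--Oppermann--Jorgensen), and the paper in fact \emph{uses} parts (1)--(3) as black boxes to anchor the base cases of Theorems~\ref{thm3.8}, \ref{thm3.11} and \ref{thm3.14}. So there is no internal proof to compare against; I will assess your plan on its own terms.

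Your part (1) is essentially correct: the termwise radical filtration of a bounded complex $X$ has $n=\LL(\Lambda)$ subquotient complexes, each a bounded complex of semisimple modules, and by a Krull--Schmidt splitting argument (exactly the paper's Lemma~\ref{lem3.5}) such a complex lies in $\langle \Lambda/\rad\Lambda\rangle_{1}$; splicing gives $X\in\langle\Lambda/\rad\Lambda\rangle_{n}$.

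Your part (2) has a real gap which you flag but do not close, and ``smart truncation'' will not close it. Replacing $X$ by a bounded complex of projectives produces a complex whose length is (amplitude of $X$) plus $\gldim\Lambda$, and gluing its terms cone by cone gives a bound depending on the amplitude of $X$, not only on $g=\gldim\Lambda$. The argument that actually works is the one behind the paper's Lemma~\ref{lem2.3} (Aihara--Takahashi): for a bounded complex whose homology has projective dimension at most $n$, one maps a ``one-degree'' complex of projectives $\bigoplus_i P^i[-i]$ (with $P^i\twoheadrightarrow H^i(X)$ a projective cover, lifted through $Z^i(X)$) into $X$ and checks that the cone has homology $\Omega^1 H^{i+1}(X)$ of projective dimension one less; this terminates after $g+1$ steps uniformly in $X$. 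That inductive cone computation, not a truncation, is the step your plan is missing.

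Your part (3) is conceptually off. In $D^b_{sg}(\mod\Lambda)$ the radical filtration of a module $M$ still has $n$ subquotients, and none of them is zero in the quotient (in particular $\top M$, which is semisimple and generally not projective, does not die). No layer is ``absorbed.'' The correct route, following \cite{BOJ15T} and \cite{DT15U}, uses two facts you never invoke: first, every object of $D^b_{sg}(\mod\Lambda)$ is isomorphic to a stalk complex of a module up to shift (a nontrivial reduction); second, for a module $M$, the exact sequence $0\to\Omega M\to P\to M\to 0$ with $P$ projective yields $M\cong \Omega M[1]$ in $D^b_{sg}$, and since $\Omega M\subseteq \rad P$ one has $\LL(\Omega M)\leqslant n-1$, so the radical filtration of $\Omega M$ has $n-1$ layers and $M\in\langle\Lambda/\rad\Lambda\rangle_{n-1}$. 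The saving comes from passing to the syzygy of $M$, not from deleting a layer of $M$'s own filtration.
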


By Theorem \ref{thm1.1}(1)(3), we have that both $\dim D^{b}(\mod \Lambda)$ and $\dim D_{sg}^{b}(\mod \Lambda)$ are finite;
however, Theorem \ref{thm1.1}(2) does not provide any information when $\gldim \Lambda$ is infinite.

For a length-category $\mathcal{C}$, generalizing the Loewy length, Huard, Lanzilotta and Hern\'andez introduced
in \cite{HLM,HLM2} the (radical) layer length associated with a torsion pair, which is a new measure for objects of
$\mathcal{C}$. Let $\Lambda$ be an artin algebra and $\mathcal{V}$ a set of some simple modules in $\mod \Lambda$.
Let $t_{\mathcal{V}}$ be the torsion radical of a torsion pair associated with $\mathcal{V}$ (see Section 3 for details). We use
$\ell\ell^{t_{\mathcal{V}}}(\Lambda)$ to denote the $t_{\mathcal{V}}$-radical layer length of $\Lambda$.
For a module $M$ in $\mod \Lambda$, we use $\pd M$ and $\id M$ to denote the projective and injective dimensions of $M$ respectively;
in particular, set $\pd M=-1=\id M$ if $M=0$. For a subclass $\mathcal{B}$ of $\mod \Lambda$, the {\bf projective dimension} $\pd\mathcal{B}$
of $\mathcal{B}$ is defined as
\begin{equation*}
\pd \mathcal{B}=
\begin{cases}
\sup\{\pd M\;|\; M\in \mathcal{B}\}, & \text{if} \;\; \mathcal{B}\neq \varnothing;\\
-1,&\text{if} \;\; \mathcal{B}=\varnothing.
\end{cases}
\end{equation*}
Dually, the {\bf injective dimension} $\id \mathcal{B}$ of $\mathcal{B}$ is defined. Note that $\mathcal{V}$ is a finite set. So, if each simple module
in $\mathcal{V}$ has finite projective (resp. injective) dimension, then $\pd \mathcal{V}$ (resp. $\id \mathcal{V}$) attains its (finite) maximum.

The aim of this paper is to prove the following

\begin{theorem}\label{thm1.2} {\rm (Theorems \ref{thm3.12} and \ref{thm3.14})}
Let $\Lambda$ be an artin algebra and $\mathcal{V}$ a set of some simple modules in $\mod \Lambda$
with $\ell\ell^{t_{\mathcal{V}}}(\Lambda)=n$. Then we have
\begin{enumerate}
\item[(1)] if $d=\min\{\pd\mathcal{V}, \id\mathcal{V}\}$,
then $\dim D^{b}(\mod \Lambda) \leqslant (d+2)(n+1)-2;$
\item[(2)] $\dim D_{sg}^{b}(\mod \Lambda)\leqslant \max\{0, n-2\}.$
\end{enumerate}
\end{theorem}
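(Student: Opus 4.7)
The plan is to prove Theorem~\ref{thm1.2} by refining the filtration arguments underlying Theorem~\ref{thm1.1}, replacing the Loewy filtration of $\Lambda$ with the finer $t_\V$-radical filtration of length $n$.

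For Part (1), the aim is to exhibit a generator $G$ of $D^{b}(\mod\Lambda)$ such that every object lies in $\langle G\rangle_{(d+2)(n+1)-1}$ (using Rouquier's thickening notation). Assume first $d=\pd\V$; the case $d=\id\V$ is handled symmetrically by Nakayama duality with $G=D\Lambda$. Three ingredients are then combined. First, each simple $V\in\V$ admits a projective resolution of length at most $d$, whose stupid truncation exhibits $V$ as an iterated cone, placing $V\in\langle\Lambda\rangle_{d+1}$ inside $D^{b}(\mod\Lambda)$; hence every semisimple module in $\add\V$ also lies in $\langle\Lambda\rangle_{d+1}$. Second, from $\ell\ell^{t_\V}(\Lambda)=n$, iterating the natural operator associated with $t_\V$ on any $M\in\mod\Lambda$ terminates in at most $n$ steps, producing a filtration $M=M_{0}\supseteq M_{1}\supseteq\cdots\supseteq M_{n}=0$ whose consecutive subquotients are either semisimple modules in $\add\V$ or objects with $t_\V=0$ and controlled radical. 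Third, one chains the $n$ triangles arising from this filtration: each layer contributes roughly $d+2$ steps (the $d+1$ needed to reach $V$ from $\Lambda$, plus one for the extension), yielding a bound of the order $(d+2)n$ on $\mod\Lambda$. A standard truncation/amplitude induction extends the bound from $\mod\Lambda$ to $D^{b}(\mod\Lambda)$, absorbing the remaining additive constants and producing the stated value $(d+2)(n+1)-2$.

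For Part (2), one works in $D^{b}_{sg}(\mod\Lambda)$, where perfect complexes are zero. The cost of resolving simples of $\V$ by projectives in Part (1) disappears, so the $t_\V$-radical filtration of length $n$ directly yields the bound $n-1$ on dimension, following the pattern of the proof of Theorem~\ref{thm1.1}(3) in \cite{BOJ15T}. The $\max\{0,n-2\}$ simply records the edge cases $n\leqslant 2$, where at most one cone is required and the trivial generator suffices.

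The main obstacle I anticipate is the precise bookkeeping in Part (1): in particular, controlling the torsion-free layers of the $t_\V$-filtration that are not themselves in $\add\V$, so that the bound genuinely refines Rouquier's $\LL(\Lambda)-1$ when $\V$ is a proper subset of the simples, and verifying that the passage from $\mod\Lambda$ to $D^{b}(\mod\Lambda)$ only incurs an overhead compatible with $(d+2)(n+1)-2$. The dual injective-dimension argument should then follow symmetrically by working inside the opposite category.
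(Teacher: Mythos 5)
Your high-level plan is right---replace the Loewy filtration with the $t_{\mathcal{V}}$-radical filtration of length $n$ and chain triangles---and this is indeed the paper's approach in spirit. But two points in your write-up are gaps rather than anticipated obstacles, and one is fatal as stated.

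The serious issue is the ``standard truncation/amplitude induction'' from $\mod\Lambda$ to $D^{b}(\mod\Lambda)$ in Part (1). That step does not give a uniform bound: if you bound each term of a complex $X=(X^i)$ by $\langle G\rangle_k$ and then pass to $X$ via its brutal-truncation triangles, the resulting level grows linearly with the amplitude of $X$, which is unbounded over $D^{b}(\mod\Lambda)$. The paper (proof of Theorem \ref{thm3.8}) avoids this entirely by applying the functors $t_{\mathcal{V}}$, $q_{t_{\mathcal{V}}}$, $\rad$, $\top$ \emph{termwise to the whole complex} $X$ at once and working with the short exact sequences of complexes $0\to t_{\mathcal{V}}(X)\to X\to q_{t_{\mathcal{V}}}(X)\to 0$ and $0\to \rad t_{\mathcal{V}}(X)\to t_{\mathcal{V}}(X)\to \top t_{\mathcal{V}}(X)\to 0$. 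The key inputs are then Lemma \ref{lem3.6} (the homology of $q_{t_{\mathcal{V}}}(X)$ lies in $\mathfrak{F}(\mathcal{V})$, hence has $\pd\leqslant a$), Lemma \ref{lem2.3} (a complex with homology of $\pd\leqslant a$ lies in $\langle\Lambda\rangle_{a+1}$), Lemma \ref{lem3.5} (a complex of semisimples lies in $\langle\Lambda/\rad\Lambda\rangle_1$), and Lemma \ref{lem3.7} ($F_{t_{\mathcal{V}}}^{n}(X)\in\langle\Lambda\rangle_{a+1}$). Iterating $n$ times costs $n(a+2)$, and the bottom layer contributes the remaining $a+1$, giving $(n+1)(a+2)-1$ levels. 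Your ``absorbing the remaining additive constants'' is exactly this last $a+1$, and it must come from the specific fact that $F^n_{t_{\mathcal{V}}}(X)$ has all terms in $\mathfrak{F}(\mathcal{V})$, not from slack in an amplitude argument.

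A secondary issue is that you have the layers' roles scrambled: the semisimple subquotients $\top t_{\mathcal{V}}(F^i_{t_{\mathcal{V}}}(X))$ lie in $\add\mathcal{V}'$ (not $\add\mathcal{V}$), may have \emph{infinite} projective dimension, and cost only $1$ level since they are semisimple; whereas the torsion-free quotients $q_{t_{\mathcal{V}}}(F^i_{t_{\mathcal{V}}}(X))$ lie in $\mathfrak{F}(\mathcal{V})$, have $\pd\leqslant d$, and cost $d+1$ levels. For Part (2), saying ``the cost of resolving simples disappears'' skips the actual mechanism (proof of Theorem \ref{thm3.14}): one first uses that every object of $D^{b}_{sg}(\mod\Lambda)$ is a shift of a stalk complex of a module, then applies $\Omega^{a+1}$ so that the torsion-free layers $q_{t_{\mathcal{V}}}(Y)$ (of $\pd\leqslant a$) literally vanish, producing $\Omega^{a+1}(Y)\cong\Omega^{a+1}(t_{\mathcal{V}}(Y))$. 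The case $n\leqslant1$ is also not a mere edge case: it requires Lemma \ref{lem3.13} showing $\ell\ell^{t_{\mathcal{S}^{<\infty}}}(\Lambda)\leqslant1$ forces $\gldim\Lambda<\infty$.
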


In Section 3, we give the proof of Theorem \ref{thm1.2}. In fact, Theorem \ref{thm1.1} is some special cases of Theorem \ref{thm1.2}
(see Remark \ref{rem3.16}). Moreover, by choosing some suitable $\mathcal{V}$ and applying Theorem \ref{thm1.2},
we may obtain more precise upper bounds for $\dim D^{b}(\mod \Lambda)$ and $\dim D^{b}_{sg}(\mod \Lambda)$
than that in Theorem \ref{thm1.1}. We give in Section 4 two examples to illustrate this and that the difference
between the upper bounds obtained from Theorems \ref{thm1.1} and \ref{thm1.2} may be arbitrarily large.

\section{Preliminaries}

\subsection{The dimension of a triangulated category}

We recall some notions from \cite{Opp07U,Rou2,Rou}.
Let $\mathcal{T}$ be a triangulated category and $\mathcal{I} \subseteq {\rm Ob}\mathcal{T}$.
Let $\langle \mathcal{I} \rangle$ be the full subcategory consisting of $\mathcal{T}$
of all direct summands of finite direct sums of shifts of objects in $\mathcal{I}$.
Given two subclasses $\mathcal{I}_{1}, \mathcal{I}_{2}\subseteq {\rm Ob}\mathcal{T}$, we denote $\mathcal{I}_{1}*\mathcal{I}_{2}$
by the full subcategory of all extensions between them, that is,
$$\mathcal{I}_{1}*\mathcal{I}_{2}=\{ X\mid  X_{1} \longrightarrow X \longrightarrow X_{2}\longrightarrow X_{1}[1]\;
{\rm with}\; X_{1}\in \mathcal{I}_{1}\; {\rm and}\; X_{2}\in \mathcal{I}_{2}\}.$$
Write $\mathcal{I}_{1}\diamond\mathcal{I}_{2}:=\langle\mathcal{I}_{1}*\mathcal{I}_{2} \rangle.$
Then $(\mathcal{I}_{1}\diamond\mathcal{I}_{2})\diamond\mathcal{I}_{3}=\mathcal{I}_{1}\diamond(\mathcal{I}_{2}\diamond\mathcal{I}_{3})$
for any subclasses $\mathcal{I}_{1}, \mathcal{I}_{2}$ and $\mathcal{I}_{3}$ of $\mathcal{T}$ by the octahedral axiom.
Write
\begin{align*}
\langle \mathcal{I} \rangle_{0}:=0,\;
\langle \mathcal{I} \rangle_{1}:=\langle \mathcal{I} \rangle\; {\rm and}\;
\langle \mathcal{I} \rangle_{n+1}:=\langle \mathcal{I} \rangle_{n}\diamond \langle \mathcal{I} \rangle_{1}\;{\rm for\; any \;}n\geqslant 1.
\end{align*}

\begin{definition}{\rm (\cite[Definiton 3.2]{Rou})}\label{def2.1}
The {\bf dimension} $\dim \mathcal{T}$ of a triangulated category $\mathcal{T}$
is the minimal $d$ such that there exists an object $M\in \mathcal{T}$ with
$\mathcal{T}=\langle M \rangle_{d+1}$. If no such $M$ exists for any $d$, then we set $\dim \mathcal{T}=\infty.$
\end{definition}

\begin{lemma}{\rm (\cite[Lemma 7.3]{Psa})}\label{lem2.2}
Let $\mathcal{T}$ be a triangulated category and let $X, Y$ be two objects of $\mathcal{T}$.
Then
$$\langle X \rangle _{m}\diamond \langle Y \rangle _{n} \subseteq \langle X\oplus Y \rangle _{m+n}$$
for any $m,n \geqslant 0$.
\end{lemma}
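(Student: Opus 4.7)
The plan is to prove the inclusion by induction on $n$, using the associativity of $\diamond$ recorded just before Definition \ref{def2.1} together with two elementary monotonicity properties. First I would record the following. (i) If $\mathcal{A} \subseteq \mathcal{A}'$ and $\mathcal{B} \subseteq \mathcal{B}'$ are subclasses of $\mathcal{T}$, then $\mathcal{A} \diamond \mathcal{B} \subseteq \mathcal{A}' \diamond \mathcal{B}'$, which is immediate from the definition $\mathcal{A} \diamond \mathcal{B} = \langle \mathcal{A} * \mathcal{B} \rangle$, since plainly $\mathcal{A} * \mathcal{B} \subseteq \mathcal{A}' * \mathcal{B}'$. (ii) For every $k \geqslant 0$, $\langle Y \rangle_k \subseteq \langle X \oplus Y \rangle_k$; the case $k = 1$ holds because every shift of $Y$ is a direct summand of the corresponding shift of $X \oplus Y$, and the case $k \geqslant 2$ follows from (i) applied to the recursion $\langle Y \rangle_k = \langle Y \rangle_{k-1} \diamond \langle Y \rangle_1$.

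Next I would carry out the induction on $n$. For the base case $n = 0$, the class $\langle Y \rangle_0$ is $0$; since each object $A$ fits in a triangle $A \to A \to 0 \to A[1]$, one has $\mathcal{A} * 0 = \mathcal{A}$, hence $\mathcal{A} \diamond 0 = \langle \mathcal{A} \rangle$. Observing that $\langle X \rangle_m$ is itself closed under the operation $\langle - \rangle$ (trivially for $m = 0$, and by construction for $m \geqslant 1$), I conclude $\langle X \rangle_m \diamond \langle Y \rangle_0 = \langle X \rangle_m \subseteq \langle X \oplus Y \rangle_m$ by (ii). For the inductive step, assume the inclusion is known for $n - 1$. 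Using $\langle Y \rangle_n = \langle Y \rangle_{n-1} \diamond \langle Y \rangle_1$ and the associativity of $\diamond$, I rewrite
$$\langle X \rangle_m \diamond \langle Y \rangle_n = (\langle X \rangle_m \diamond \langle Y \rangle_{n-1}) \diamond \langle Y \rangle_1.$$
Applying the inductive hypothesis, (ii) with $k = 1$, and (i) twice, the right-hand side lies in $\langle X \oplus Y \rangle_{m+n-1} \diamond \langle X \oplus Y \rangle_1 = \langle X \oplus Y \rangle_{m+n}$, which closes the induction.

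The only mildly delicate point is the edge behavior at $n = 0$, where one must verify the identity $\mathcal{A} \diamond 0 = \mathcal{A}$ together with the fact that $\langle X \rangle_m$ is already closed under $\langle - \rangle$. Once this bookkeeping is in place, the argument is purely formal, and I do not anticipate any significant obstacle.
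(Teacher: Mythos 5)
Your proof is correct. The paper itself does not prove this lemma but simply cites it from Psaroudakis (\cite[Lemma 7.3]{Psa}); the standard argument there is the same induction-on-$n$ using associativity of $\diamond$, the monotonicity of $\diamond$ with respect to inclusions, and the inclusion $\langle Y \rangle_k \subseteq \langle X \oplus Y \rangle_k$, which is exactly what you carry out. One tiny slip in bookkeeping: in your base case $n=0$ you invoke (ii) to conclude $\langle X \rangle_m \subseteq \langle X \oplus Y \rangle_m$, but (ii) as stated treats $\langle Y \rangle_k$; you of course want the symmetric version with $X$ in place of $Y$, which holds by the identical argument, so this does not affect the validity of the proof.
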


\begin{lemma}{\rm (\cite[Proposition 3.2]{AiT})}\label{lem2.3}
Let $\mathcal{A}$ be an abelian category admitting enough projective objects.
Let $X=(X^{i},d^{i})$ be a bounded complex in $\mathcal{A}$ such that
the homology $H^{i}(X)$ has projective dimension at most
$n$ for all $i$. Then $X\in\langle \mathcal{P}\rangle_{n+1}  \subseteq D^{b}(\mathcal{A})$
for the subcategory $ \mathcal{P} \subseteq \mathcal{A}$
of projective objects.
\end{lemma}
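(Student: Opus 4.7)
The plan is to induct on $n \geq -1$, where the base case $n = -1$ (i.e., $X$ acyclic) gives $X \simeq 0 \in \langle \mathcal{P}\rangle_0$ trivially. For the inductive step, the strategy is to peel off a single ``projective layer'' from $X$, producing a cone whose cohomologies have projective dimension at most $n-1$, so that the induction hypothesis applies.

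Concretely, using $\pd H^i(X) \leq n$, for each $i$ I would choose a projective object $Q^i$ together with a surjection $\pi^i : Q^i \twoheadrightarrow H^i(X)$ whose kernel has projective dimension at most $n-1$ (take $Q^i$ to be the $0$-th term of a projective resolution of $H^i(X)$; set $Q^i = 0$ when $H^i(X)=0$). Only finitely many $Q^i$ are nonzero since $X$ is bounded. Projectivity of $Q^i$ combined with the surjection $Z^i(X) \twoheadrightarrow H^i(X)$ from cocycles to cohomology lifts $\pi^i$ to a map $\tilde\pi^i : Q^i \to Z^i(X) \hookrightarrow X^i$. Because the image of each $\tilde\pi^i$ lies inside the cocycles, the family $(\tilde\pi^i)_i$ assembles into a chain map $f : Q^\bullet \to X$, where $Q^\bullet := \bigoplus_i Q^i[-i]$ carries zero differentials; by construction $H^i(f) = \pi^i$ is surjective for every $i$.

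Let $C$ be the mapping cone of $f$, giving a distinguished triangle $Q^\bullet \to X \to C \to Q^\bullet[1]$. Because each $H^i(f)$ is surjective and $H^i(Q^\bullet) = Q^i$, the long exact cohomology sequence collapses into short exact sequences
\[
0 \longrightarrow H^i(C) \longrightarrow Q^{i+1} \xrightarrow{\pi^{i+1}} H^{i+1}(X) \longrightarrow 0,
\]
identifying $H^i(C)$ with $\ker \pi^{i+1}$, which has projective dimension at most $n-1$ by the choice of $\pi^{i+1}$. By the induction hypothesis, $C \in \langle \mathcal{P}\rangle_n$; and $Q^\bullet$, being a finite direct sum of shifts of projectives, lies in $\langle \mathcal{P}\rangle_1$. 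The triangle then forces $X \in \langle \mathcal{P}\rangle_1 \diamond \langle \mathcal{P}\rangle_n = \langle \mathcal{P}\rangle_{n+1}$, completing the induction.

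The only real technical point is arranging the chain map $f$ so that $H^i(f)$ is surjective for every $i$ simultaneously; this is exactly what forces each $H^i(C)$ to be a first syzygy of $H^{i+1}(X)$ and thereby drops the cohomology-wise projective dimension by one.
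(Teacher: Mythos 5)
The paper states this lemma without proof, citing [AiT, Proposition 3.2]; so there is no in-paper argument to compare against. Your inductive argument is correct and is, as far as I can tell, essentially the argument of the cited source: build a zero-differential complex $Q^\bullet$ of projectives mapping onto the cohomology of $X$ termwise, observe that the cone has cohomologies equal to first syzygies of the $H^i(X)$, and induct. Two small points worth noting. First, when $\pd H^i(X)=0$ you must take $Q^i = H^i(X)$ itself so that $\ker\pi^i = 0$; an arbitrary projective surjection onto a projective module only gives a projective (not zero) kernel, and your parenthetical ``take $Q^i$ to be the $0$-th term of a projective resolution'' is what makes this work, so it deserves to be said explicitly. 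Second, you use $\langle\mathcal{P}\rangle_1 \diamond \langle\mathcal{P}\rangle_n = \langle\mathcal{P}\rangle_{n+1}$, whereas the paper's recursion is $\langle\mathcal{P}\rangle_{n+1} = \langle\mathcal{P}\rangle_n \diamond \langle\mathcal{P}\rangle_1$; these agree by associativity of $\diamond$, but that step should be acknowledged rather than treated as the definition.
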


Dually, we have

\begin{lemma}\label{lem2.4}
Let $\mathcal{A}$ be an abelian category admitting enough injective objects.
Let $X=(X^{i},d^{i})$ be a bounded complex in $\mathcal{A}$ such that
the homology $H^{i}(X)$ has injective dimension at most
$n$ for all $i$.Then $X\in\langle \mathcal{E}\rangle_{n+1}  \subseteq D^{b}(\mathcal{A})$
for the subcategory $\mathcal{E} \subseteq \mathcal{A}$ of injective objects.
\end{lemma}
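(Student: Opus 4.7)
The statement is exactly dual to Lemma \ref{lem2.3}, so my plan is to deduce it by dualization rather than to reprove it from scratch. Since $\mathcal{A}$ admits enough injectives, the opposite abelian category $\mathcal{A}^{op}$ admits enough projectives, and the projective objects of $\mathcal{A}^{op}$ are precisely the injective objects $\mathcal{E}$ of $\mathcal{A}$ viewed in $\mathcal{A}^{op}$. The canonical contravariant equivalence $\mathcal{A} \to \mathcal{A}^{op}$ extends to a triangulated anti-equivalence $D^{b}(\mathcal{A})^{op} \simeq D^{b}(\mathcal{A}^{op})$ sending a bounded complex $X$ to a bounded complex $X^{op}$ whose homology satisfies $H^{i}(X^{op}) \cong H^{-i}(X)$, and turning injective dimension in $\mathcal{A}$ into projective dimension in $\mathcal{A}^{op}$.

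The second step is to check that this anti-equivalence transports the operation $\langle - \rangle_{k}$ faithfully. Because it commutes with shifts (swapping $[1]$ and $[-1]$), preserves finite direct sums and direct summands, and sends distinguished triangles to (rotated) distinguished triangles, the operation $\mathcal{I}_{1} \diamond \mathcal{I}_{2}$ on the $\mathcal{A}$ side corresponds to $\mathcal{I}_{2}^{op} \diamond \mathcal{I}_{1}^{op}$ on the $\mathcal{A}^{op}$ side; inductively, $\langle \mathcal{E}\rangle_{k}$ in $D^{b}(\mathcal{A})$ corresponds to $\langle \mathcal{P}\rangle_{k}$ in $D^{b}(\mathcal{A}^{op})$ for every $k$.

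Given these two observations, the argument closes quickly. Applying Lemma \ref{lem2.3} in $\mathcal{A}^{op}$ to the complex $X^{op}$, whose homologies all have projective dimension at most $n$ in $\mathcal{A}^{op}$ by hypothesis, places $X^{op}$ in $\langle \mathcal{P}\rangle_{n+1}\subseteq D^{b}(\mathcal{A}^{op})$; transporting back via the anti-equivalence then yields $X \in \langle \mathcal{E}\rangle_{n+1}$ in $D^{b}(\mathcal{A})$.

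The main obstacle I foresee is the bookkeeping in the second step, i.e.\ verifying that distinguished triangles and the $\diamond$ operation behave correctly under the opposite-category anti-equivalence. Should this formal route feel unsatisfactory, a parallel plan is to imitate the proof of Lemma \ref{lem2.3} directly: choose, for each $i$, an injective coresolution of $H^{i}(X)$ of length at most $n$, assemble them into a Cartan--Eilenberg injective coresolution of $X$ of width $n+1$, and then exhibit $X$ as built from at most $n+1$ successive extensions of objects in $\mathcal{E}$ using mapping cones of the associated brutal truncations.
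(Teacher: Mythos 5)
Your proof is correct and matches the paper's intent: the paper introduces Lemma \ref{lem2.4} with the single word ``Dually,'' so the intended argument is precisely the passage to $\mathcal{A}^{op}$ and an appeal to Lemma \ref{lem2.3} that you carry out. The only thing you spell out that the paper leaves implicit is the bookkeeping that the anti-equivalence $D^{b}(\mathcal{A})^{op}\simeq D^{b}(\mathcal{A}^{op})$ reverses the order in $\diamond$, which is harmless here since the generating class on each side is a single fixed class.
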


\subsection{ Radical layer lengths and torsion pairs}

We recall some notions from \cite{HLM2}.
Let $\mathcal{C}$ be a {\bf length-category}, that is, $\mathcal{C}$
is an abelian, skeletally small category and every object of $\mathcal{C}$ has a finite composition series.
We use $\End_{\mathbb{Z}}(\mathcal{C})$ to denote the category of all additive functors from
$\mathcal{C}$ to $\mathcal{C}$, and use $\rad$ to denote the Jacobson radical lying in $\End_{\mathbb{Z}}(\mathcal{C})$.
For any $\alpha\in\End_{\mathbb{Z}}(\mathcal{C})$, set the {\bf $\alpha$-radical functor} $F_{\alpha}:=\rad\circ \alpha$.

\begin{definition}{\rm (\cite[Definition 3.1]{HLM2})\label{def2.5}
For any $\alpha, \beta \in \End_{\mathbb{Z}}(\mathcal{C})$, we define
the {\bf $(\alpha,\beta)$-layer length} $\ell\ell_{\alpha}^{\beta}:\mathcal{C} \longrightarrow \mathbb{N}\cup \{\infty\}$ via
$\ell\ell_{\alpha}^{\beta}(M)=\inf\{ i \geqslant 0\mid \alpha \circ \beta^{i}(M)=0 \}$; and
the {\bf $\alpha$-radical layer length} $\ell\ell^{\alpha}:=\ell\ell_{\alpha}^{F_{\alpha}}$.}
\end{definition}

\begin{lemma}\label{lem2.6}
Let $\alpha,\beta \in\End_{\mathbb{Z}}(\mathcal{C}) $.
For any $M\in \mathcal{C}$, if $\ell\ell_{\alpha}^{\beta}(M)=n$, then $\ell\ell_{\alpha}^{\beta}(M)=\ell\ell_{\alpha}^{\beta}(\beta^{j}(M))+j$
for any $0 \leqslant j\leqslant n$; in particular, if $\ell\ell^{\alpha}(M)=n$, then $\ell\ell^{\alpha}(F_{\alpha}^{n}(M))=0$.
\end{lemma}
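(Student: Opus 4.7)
The statement is essentially a bookkeeping lemma about an index shift in the defining infimum, so the plan is to unwind Definition~\ref{def2.5} carefully and chase the indices.

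First I would fix $M\in\mathcal{C}$ with $\ell\ell_{\alpha}^{\beta}(M)=n<\infty$, which by definition means $n$ is the smallest nonnegative integer with $\alpha\circ\beta^{n}(M)=0$. For any $0\leqslant j\leqslant n$ I would then compute
\[
\ell\ell_{\alpha}^{\beta}(\beta^{j}(M))
=\inf\{\,i\geqslant 0\mid \alpha\circ\beta^{i}(\beta^{j}(M))=0\,\}
=\inf\{\,i\geqslant 0\mid \alpha\circ\beta^{i+j}(M)=0\,\},
\]
using functoriality of $\beta$ so that $\beta^{i}\circ\beta^{j}=\beta^{i+j}$. Setting $k=i+j$ turns this into the smallest $k\geqslant j$ with $\alpha\circ\beta^{k}(M)=0$; since $j\leqslant n$ and since $k<n$ is ruled out by minimality of $n$, the smallest such $k$ is exactly $n$, hence $i=n-j$. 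This yields $\ell\ell_{\alpha}^{\beta}(\beta^{j}(M))+j=n=\ell\ell_{\alpha}^{\beta}(M)$, which is the required identity.

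For the ``in particular'' clause I would simply specialise the first part to $\beta=F_{\alpha}$, so that $\ell\ell_{\alpha}^{F_{\alpha}}=\ell\ell^{\alpha}$, and take $j=n$; the formula then reads $\ell\ell^{\alpha}(F_{\alpha}^{n}(M))+n=n$, whence $\ell\ell^{\alpha}(F_{\alpha}^{n}(M))=0$.

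There is essentially no obstacle here beyond being careful that the infimum is a minimum (which holds because we are working in $\mathbb{N}\cup\{\infty\}$ and the hypothesis $n<\infty$ means the set is nonempty), and remembering the conventions $\beta^{0}=\Id$ and $F_{\alpha}=\rad\circ\alpha$ so that the composition identities $\beta^{i}\circ\beta^{j}=\beta^{i+j}$ are genuine equalities of endofunctors. The proof is therefore a two-line index manipulation.
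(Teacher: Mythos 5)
Your proof is correct and follows essentially the same route as the paper: unwind Definition~\ref{def2.5}, rewrite $\alpha\circ\beta^{i}(\beta^{j}(M))=\alpha\circ\beta^{i+j}(M)$, and read off the shifted infimum, then specialise to $\beta=F_{\alpha}$, $j=n$. The only difference is that you spell out the index substitution $k=i+j$ and the appeal to minimality of $n$, where the paper simply asserts the resulting value $n-j$.
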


\begin{proof}
If $\ell\ell_{\alpha}^{\beta}(M)=n$, then
$n=\inf\{ i \geqslant 0\mid \alpha \beta^{i}(M)=0 \}$.
By Definition \ref{def2.5}, for any $0 \leqslant j\leqslant n$, we have
\begin{align*}
\ell\ell_{\alpha}^{\beta}(\beta^{j}(M))=\inf\{i\geqslant 0\,|\, \alpha \beta^{i+j}(M)=0\}=n-j,
\end{align*}
that is, $\ell\ell_{\alpha}^{\beta}(M)=\ell\ell_{\alpha}^{\beta}(\beta^{j}(M))+j$.
In particular, if $\ell\ell^{\alpha}(M)=n$, then putting $\beta=F_{\alpha}$ we have $\ell\ell^{\alpha}(F_{\alpha}^{n}(M))=\ell\ell^{\alpha}(M)-n=n-n=0$.
\end{proof}

Recall that a {\bf torsion pair} (or {\bf torsion theory}) for $\mathcal{C}$
is a pair of classes $(\mathcal{T},\mathcal{F})$ of objects in $\mathcal{C}$ satisfying the following conditions.
\begin{enumerate}
\item[(1)] $\Hom_{\mathcal{C}}(M,N)=0$ for any $M\in\mathcal{T}$ and $N\in\mathcal{F}$;
\item[(2)] an object $X \in \mathcal{C}$ is in $\mathcal{T}$ if $\Hom_{\mathcal{C}}(X,-)|_{\mathcal{F}}=0$;
\item[(3)] an object $Y\in\mathcal{C}$ is in $\mathcal{F}$ if $\Hom_{\mathcal{C}}(-,Y)|_{\mathcal{T}}=0$.
\end{enumerate}

For a subfunctor $\alpha$ of the identity functor $1_{\C}$, we write $q_{\alpha}:=1_{\mathcal{C}}/\alpha$.
Let $(\mathcal{T},\mathcal{F})$ be a torsion pair for $\mathcal{C}$.
Recall that the {\bf torsion radical} $t$ is a functor in $\End_{\mathbb{Z}}(\mathcal{C})$ such that
$$0 \longrightarrow  t(M)\longrightarrow M \longrightarrow  q_{t}(M)\longrightarrow 0$$
is a short exact sequence and $q_{t}(M)=M/t(M)\in \mathcal{F}$.

\section{Main results}

In this section, $\Lambda$ is an artin algebra. Then $\mod \Lambda$ is a length-category.
For a module $M$ in $\mod\Lambda$, we use $\rad M$, $\soc M$ and $\top M$ to denote the radical, socle and top of $M$ respectively.
For a subclass $\mathcal{W}$ of $\mod \Lambda$, we use $\add \mathcal{W}$ to denote the subcategory
of $\mod \Lambda$ consisting of direct summands of finite direct sums of modules in $\mathcal{W}$,
and if $\mathcal{W}=\{M\}$ for some $M\in \mod \Lambda$, we write $\add M:=\add \mathcal{W}$.

Let $\mathcal{S}$ be the set of all simple modules in $\mod \Lambda$, and let $\mathcal{V}$ be a subset of $\mathcal{S}$
and $\mathcal{V}'$ the set of all the others simple modules in $\mod \Lambda$, that is, $\mathcal{V}'=\mathcal{S}\backslash\mathcal{V}$.
We write $\mathfrak{F}\,(\mathcal{V}):=\{M\in\mod\Lambda\mid$ there exists a finite chain
$$0=M_0\subseteq M_1\subseteq \cdots\subseteq M_m=M$$ of submodules of $M$
such that each quotient $M_i / M_{i-1}$ is isomorphic to some module in $\mathcal{V}\}$.
By \cite[Lemma 5.7 and Proposition 5.9]{HLM2}, we have that
$(\mathcal{T}_{\mathcal{V}}, \mathfrak{F}(\mathcal{V}))$ is a torsion pair, where
$$\mathcal{T}_{\mathcal{V}}=\{M \in \mod \Lambda\mid\top M\in \add \mathcal{V}'\}.$$
We use $t_{\mathcal{V}}$ to denote the torsion radical of the torsion pair $(\mathcal{T}_{\mathcal{V}}, \mathfrak{F}(\mathcal{V}))$.
Then $t_{\mathcal{V}}(M)\in \mathcal{T}_{\mathcal{V}}$ and $q_{_{t_{\mathcal{V}}}}(M)\in\mathfrak{F}(\mathcal{V})$ for any
$M\in \mod \Lambda$. By \cite[Propositions 5.3 and 5.9(a)]{HLM2}, we have

\begin{proposition}\label{prop3.1}
\item[(1)] $\mathfrak{F}(\mathcal{V})=\{ M\in \mod \Lambda \;|\; t_{\mathcal{V}}(M)=0\}$;
\item[(2)] $\mathcal{T}_{\mathcal{V}}=\{ M\in \mod \Lambda \;|\; t_{\mathcal{V}}(M)= M\}$;
\item[(3)] $\top M\in \add \mathcal{V}'$ if and only if $t_{\mathcal{V}}(M)=M$.
\end{proposition}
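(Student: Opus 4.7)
The plan is to deduce all three statements from the defining short exact sequence $0\to t_{\mathcal{V}}(M)\to M\to q_{t_{\mathcal{V}}}(M)\to 0$ of the torsion radical, together with two easy closure properties and one vanishing observation. First I would record that $\mathfrak{F}(\mathcal{V})$ is closed under submodules (intersect a filtration whose factors lie in $\mathcal{V}$ with a given submodule and read off a refined filtration whose factors are subobjects, hence isomorphic to members of $\mathcal{V}$ or zero) and that $\mathcal{T}_{\mathcal{V}}$ is closed under quotients (because $\top(M/N)$ is a quotient of $\top M$, and $\add \mathcal{V}'$ is closed under quotients in $\mod\Lambda$). The central observation is that any $X\in \mathcal{T}_{\mathcal{V}}\cap \mathfrak{F}(\mathcal{V})$ must vanish, since the torsion-pair axiom $\Hom_{\Lambda}(\mathcal{T}_{\mathcal{V}},\mathfrak{F}(\mathcal{V}))=0$ forces $\Id_{X}=0$.

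For (1), if $t_{\mathcal{V}}(M)=0$ the sequence collapses to $M\cong q_{t_{\mathcal{V}}}(M)\in \mathfrak{F}(\mathcal{V})$. Conversely, given $M\in \mathfrak{F}(\mathcal{V})$, the submodule $t_{\mathcal{V}}(M)$ lies in $\mathcal{T}_{\mathcal{V}}$ by construction and in $\mathfrak{F}(\mathcal{V})$ by submodule-closure, so it vanishes by the observation above.

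For (2), if $t_{\mathcal{V}}(M)=M$ then $M\in \mathcal{T}_{\mathcal{V}}$ trivially. Conversely, when $M\in \mathcal{T}_{\mathcal{V}}$, the quotient $q_{t_{\mathcal{V}}}(M)$ lies in $\mathcal{T}_{\mathcal{V}}$ by quotient-closure and in $\mathfrak{F}(\mathcal{V})$ by the torsion decomposition, so the vanishing observation gives $q_{t_{\mathcal{V}}}(M)=0$, i.e., $t_{\mathcal{V}}(M)=M$.

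Part (3) is then essentially a tautology: by definition $M\in \mathcal{T}_{\mathcal{V}}$ is exactly the condition $\top M\in \add \mathcal{V}'$, which by (2) is equivalent to $t_{\mathcal{V}}(M)=M$. I do not foresee any substantive obstacle here; the whole proposition is an unpacking of the definitions and standard torsion-pair yoga, and the one small pitfall is keeping the two closure directions straight (submodules for $\mathfrak{F}(\mathcal{V})$, quotients for $\mathcal{T}_{\mathcal{V}}$), which is why I would state them explicitly at the outset.
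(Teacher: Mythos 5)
Your proof is correct, but it takes a genuinely different route from the paper's. The paper does not give an argument at all: it derives Proposition \ref{prop3.1} by direct citation of \cite[Propositions 5.3 and 5.9(a)]{HLM2}, so its ``proof'' is a reference. You instead give a self-contained argument from the torsion-pair axioms and the defining short exact sequence of the torsion radical, which is a nice thing to have written out. The steps all check: $t_{\mathcal{V}}(M)\in\mathcal{T}_{\mathcal{V}}$ by definition of the torsion radical, $q_{t_{\mathcal{V}}}(M)\in\mathfrak{F}(\mathcal{V})$ by definition, $\mathcal{T}_{\mathcal{V}}\cap\mathfrak{F}(\mathcal{V})=0$ from $\Hom(\mathcal{T},\mathcal{F})=0$, and the two closure properties you invoke are valid. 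One small remark worth noting: you verify the closure of $\mathfrak{F}(\mathcal{V})$ under submodules and of $\mathcal{T}_{\mathcal{V}}$ under quotients directly from the filtration definition and from the behavior of $\top$ on quotients, respectively; both of these closure properties actually hold automatically for the torsion class and torsion-free class of \emph{any} torsion pair in an abelian category, so once you accept that $(\mathcal{T}_{\mathcal{V}},\mathfrak{F}(\mathcal{V}))$ is a torsion pair you could cite them for free. Your explicit verification is harmless and arguably more illuminating about where the specific structure of $\mathcal{V}$-filtrations and $\top$ enters, but it is not strictly necessary. The net effect is that your proof is more elementary and transparent than the paper's pointer to \cite{HLM2}, at the cost of reproving standard torsion-pair facts.
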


As a consequence, we get the following

\begin{proposition}\label{prop3.2}
If $\mathcal{V}=\varnothing$, then $\ell\ell^{t_{\mathcal{V}}}(M)=\LL(M)$ for any $M\in\mod \Lambda$.
\end{proposition}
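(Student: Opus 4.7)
The plan is to observe that when $\mathcal{V}=\varnothing$, the torsion radical $t_{\mathcal{V}}$ collapses to the identity functor on $\mod\Lambda$, and then to unwind the definition of the $t_{\mathcal{V}}$-radical layer length to see that it reduces to the Loewy length.

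First I would note that if $\mathcal{V}=\varnothing$, then $\mathcal{V}' = \mathcal{S}\setminus\mathcal{V} = \mathcal{S}$ is the full set of simple modules. For any $M\in\mod\Lambda$, the top $\top M$ is a direct sum of simple modules, hence $\top M\in\add\mathcal{V}'$. By Proposition \ref{prop3.1}(3), this is equivalent to $t_{\mathcal{V}}(M)=M$. Thus $t_{\mathcal{V}}=1_{\mod\Lambda}$, the identity functor on $\mod\Lambda$.

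Next I would compute the associated radical functor. By definition $F_{t_{\mathcal{V}}} = \rad\circ t_{\mathcal{V}} = \rad\circ 1_{\mod\Lambda} = \rad$, so iterating gives $F_{t_{\mathcal{V}}}^{i}(M) = \rad^{i}(M)$ for all $i\geqslant 0$. Unwinding Definition \ref{def2.5},
\begin{align*}
\ell\ell^{t_{\mathcal{V}}}(M) &= \ell\ell_{t_{\mathcal{V}}}^{F_{t_{\mathcal{V}}}}(M) = \inf\{i\geqslant 0 \mid t_{\mathcal{V}}\circ F_{t_{\mathcal{V}}}^{i}(M)=0\}\\
&= \inf\{i\geqslant 0 \mid \rad^{i}(M)=0\} = \LL(M),
\end{align*}
which is precisely the Loewy length of $M$.

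The argument is essentially bookkeeping, so there is no real obstacle; the only substantive point is the identification $t_{\varnothing} = 1_{\mod\Lambda}$, which follows immediately from Proposition \ref{prop3.1}(3) once one observes that $\mathcal{V}'$ becomes all of $\mathcal{S}$ when $\mathcal{V}=\varnothing$.
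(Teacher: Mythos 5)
Your proof is correct and follows essentially the same route as the paper: both identify $t_{\mathcal{V}}$ with the identity functor (via Proposition \ref{prop3.1}) and then conclude $\ell\ell^{t_{\mathcal{V}}}=\LL$. You spell out the final unwinding $F_{t_{\mathcal{V}}}=\rad$ and $\ell\ell^{t_{\mathcal{V}}}(M)=\inf\{i\geqslant 0\mid \rad^i(M)=0\}$ in more detail than the paper, which simply asserts the conclusion once $t_{\mathcal{V}}(M)=M$ is established; this extra care is welcome but not a different argument.
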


\begin{proof}
If $\mathcal{V}=\varnothing$, then the torsion pair $(\mathcal{T}_{\mathcal{V}}, \mathfrak{F}(\mathcal{V}))=(\mod \Lambda,0)$.
By Proposition \ref{prop3.1}(3), for any $M\in \mod \Lambda$ we have $t_{\mathcal{V}}(M)=M$ and $\ell\ell^{t_{\mathcal{V}}}(M)=\LL(M)$.
\end{proof}

\begin{lemma}\label{lem3.3}
\begin{enumerate}
\item[]
\item[(1)]
$\mathfrak{F}(\mathcal{V})$ is closed under extensions, submodules and quotient modules.
\item[(2)] The functor $t_{\V}$ preserves monomorphisms and epimorphisms.
\end{enumerate}
\end{lemma}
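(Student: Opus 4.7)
\textbf{Proof plan for Lemma \ref{lem3.3}.}

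For part (1), I would exploit the fact that every quotient appearing in the chain defining $\mathfrak{F}(\mathcal{V})$ is a simple module in $\mathcal{V}$. For extensions, given a short exact sequence $0 \to A \to B \to C \to 0$ with $A,C \in \mathfrak{F}(\mathcal{V})$, I concatenate the chain of $A$ with the preimage in $B$ of the chain of $C$ to obtain a chain of $B$ all of whose factors lie in $\mathcal{V}$. For submodules, given $N \subseteq M$ and a chain $0 = M_0 \subseteq \cdots \subseteq M_m = M$ realizing $M \in \mathfrak{F}(\mathcal{V})$, I set $N_i := N \cap M_i$ and observe that the induced map $N_i/N_{i-1} \hookrightarrow M_i/M_{i-1}$ is a monomorphism into a simple module in $\mathcal{V}$, hence $N_i/N_{i-1}$ is either zero or isomorphic to that simple module. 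Deleting repetitions yields a witness that $N \in \mathfrak{F}(\mathcal{V})$. For quotient modules $M \twoheadrightarrow Q$ with kernel $K$, the images $Q_i := (M_i + K)/K$ give a chain whose successive quotients are quotients of the simple modules $M_i/M_{i-1} \in \mathcal{V}$, so again each is $0$ or lies in $\mathcal{V}$.

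For part (2), the monomorphism case is essentially formal: since $t_{\mathcal{V}}$ is a subfunctor of $1_{\mod\Lambda}$, the map $t_{\mathcal{V}}(f) : t_{\mathcal{V}}(M) \to t_{\mathcal{V}}(N)$ is the restriction of the monomorphism $f$, hence a monomorphism. The substantive case is epimorphisms. Let $f : M \twoheadrightarrow N$ be surjective. I first record the auxiliary fact that $\mathcal{T}_{\mathcal{V}}$ is closed under quotients: if $X \twoheadrightarrow Y$ and $\top X \in \add\mathcal{V}'$, then $\top Y$ is a quotient of $\top X$, so still lies in $\add\mathcal{V}'$, and Proposition \ref{prop3.1}(3) gives $Y \in \mathcal{T}_{\mathcal{V}}$. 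Consequently $f(t_{\mathcal{V}}(M))$, being a quotient of $t_{\mathcal{V}}(M) \in \mathcal{T}_{\mathcal{V}}$, lies in $\mathcal{T}_{\mathcal{V}}$, and by maximality $f(t_{\mathcal{V}}(M)) \subseteq t_{\mathcal{V}}(N)$.

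The reverse containment is the one real point of the argument; the plan is to identify $Q := t_{\mathcal{V}}(N)/f(t_{\mathcal{V}}(M))$ simultaneously as a torsion and a torsion-free object. On one hand, $Q$ is a quotient of $t_{\mathcal{V}}(N) \in \mathcal{T}_{\mathcal{V}}$, hence in $\mathcal{T}_{\mathcal{V}}$ by the closure just established. On the other hand, the surjection $f$ induces a surjection $M/t_{\mathcal{V}}(M) \twoheadrightarrow N/f(t_{\mathcal{V}}(M))$, and since $M/t_{\mathcal{V}}(M) = q_{t_{\mathcal{V}}}(M) \in \mathfrak{F}(\mathcal{V})$, part (1) gives $N/f(t_{\mathcal{V}}(M)) \in \mathfrak{F}(\mathcal{V})$; now $Q$ is a submodule of this, so part (1) again yields $Q \in \mathfrak{F}(\mathcal{V})$. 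Since $(\mathcal{T}_{\mathcal{V}},\mathfrak{F}(\mathcal{V}))$ is a torsion pair, Proposition \ref{prop3.1}(1)(2) forces $\mathcal{T}_{\mathcal{V}} \cap \mathfrak{F}(\mathcal{V}) = 0$, so $Q = 0$ and $t_{\mathcal{V}}(f)$ is surjective. The main obstacle is precisely this last torsion/torsion-free dichotomy, and the whole point of arranging the proof so that part (1) comes first is that both halves of the dichotomy then fall out of it.
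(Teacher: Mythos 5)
Your proposal is correct and complete. Where it differs from the paper is in self‑containment rather than in mathematical content: the paper disposes of the lemma by three citations --- part (1) is quoted from \cite[Lemma 5.7]{HLM2}, preservation of monomorphisms from \cite[Lemma 3.6(a)]{HLM2}, and preservation of epimorphisms from Beachy's \cite[Proposition 1.3]{B71} (which says that a torsion radical preserves epimorphisms as soon as the torsion-free class is closed under quotients). You reconstruct each of those arguments from scratch. Your treatment of (1) (concatenating chains for extensions, intersecting with the chain and discarding repeats for submodules, pushing the chain forward for quotients) is the standard proof and is essentially what lies behind the HLM2 lemma. Your epimorphism argument is a direct proof of the relevant special case of Beachy's result: you show $f(t_{\mathcal{V}}(M))\subseteq t_{\mathcal{V}}(N)$ using that $\mathcal{T}_{\mathcal{V}}$ is closed under quotients, and then kill the cokernel $Q$ by exhibiting it as simultaneously torsion and torsion-free. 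One small remark: you invoke closure of $\mathfrak{F}(\mathcal{V})$ under submodules to place $Q$ in $\mathfrak{F}(\mathcal{V})$, whereas Beachy's statement needs only quotient closure --- indeed, the image of $t_{\mathcal{V}}(N)$ in $N/f(t_{\mathcal{V}}(M))$ is a torsion submodule of a torsion-free object and hence already zero, which gives $t_{\mathcal{V}}(N)\subseteq f(t_{\mathcal{V}}(M))$ without ever mentioning submodules. This is purely cosmetic; your argument as written is valid, and in context $\mathfrak{F}(\mathcal{V})$ is closed under submodules anyway.
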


\begin{proof}
(1) It is \cite[Lemma 5.7]{HLM2}.

(2) By \cite[Lemma 3.6(a)]{HLM2}, we have that $t_{\V}$ preserves monomorphisms.
Since $\mathfrak{F}(\mathcal{V})$ is closed under quotient modules by (1),
we have that $t_{\mathcal{V}}$ preserves epimorphisms by \cite[Proposition 1.3]{B71}.
\end{proof}

We use $\mathbb{D}$ to denote the usual duality between $\mod \Lambda$ and $\mod \Lambda^{op}$.

\begin{proposition}\label{prop3.4}
Let $G$ be a generator and $E$ a cogenerator for $\mod \Lambda$. Then
$\ell\ell^{t_{\V}}(G)=\ell\ell^{t_{\V}}(E)$.
In particular, for any $M\in \mod \Lambda$, we have
$$\ell\ell^{t_{\V}}(M)\leqslant\ell\ell^{t_{\V}}(\Lambda)=\ell\ell^{t_{\V}}(\mathbb{D}(\Lambda)).$$
\end{proposition}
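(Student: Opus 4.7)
The plan is to first show that $\ell\ell^{t_{\V}}$ is monotone under submodules and quotient modules and additive on finite direct sums, and then to exploit the defining properties of generators and cogenerators.

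The opening observation is that $F_{t_{\V}}=\rad\circ t_{\V}$, and hence each iterate $F_{t_{\V}}^{i}$, preserves monomorphisms and epimorphisms. For $t_{\V}$ this is Lemma~\ref{lem3.3}(2); for $\rad$ it is standard, since $\rad M=M\cdot\rad\Lambda$, so a monomorphism $N\hookrightarrow M$ restricts to a monomorphism $N\cdot\rad\Lambda\hookrightarrow M\cdot\rad\Lambda$, and a surjection $M\twoheadrightarrow M'$ restricts to a surjection $M\cdot\rad\Lambda\twoheadrightarrow M'\cdot\rad\Lambda$. Both $t_{\V}$ and $\rad$ are clearly additive, so $F_{t_{\V}}^{i}$ commutes with finite direct sums.

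With this in hand I would prove the key monotonicity lemma: if there is a monomorphism $N\hookrightarrow M$ or an epimorphism $M\twoheadrightarrow N$, then $\ell\ell^{t_{\V}}(N)\leqslant\ell\ell^{t_{\V}}(M)$. Indeed, if $\ell\ell^{t_{\V}}(M)=n<\infty$, then $t_{\V}F_{t_{\V}}^{n}(M)=0$ by Definition~\ref{def2.5}, and the preservation statement above yields a monomorphism (respectively epimorphism) $t_{\V}F_{t_{\V}}^{n}(N)\to t_{\V}F_{t_{\V}}^{n}(M)=0$, forcing $t_{\V}F_{t_{\V}}^{n}(N)=0$ and hence $\ell\ell^{t_{\V}}(N)\leqslant n$. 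The additivity $\ell\ell^{t_{\V}}(M^{k})=\ell\ell^{t_{\V}}(M)$ is then immediate from $t_{\V}F_{t_{\V}}^{i}(M^{k})=\bigl(t_{\V}F_{t_{\V}}^{i}(M)\bigr)^{k}$.

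Now the main statement falls out. Since $G$ is a generator and $E$ is a cogenerator, there exist an epimorphism $G^{n}\twoheadrightarrow E$ and a monomorphism $G\hookrightarrow E^{m}$ for suitable $n,m$. Applying the monotonicity lemma and additivity gives
\[
\ell\ell^{t_{\V}}(E)\leqslant\ell\ell^{t_{\V}}(G^{n})=\ell\ell^{t_{\V}}(G)\leqslant\ell\ell^{t_{\V}}(E^{m})=\ell\ell^{t_{\V}}(E),
\]
so equality holds throughout. For the ``in particular'' clause I note that $\Lambda$ is a generator and $\mathbb{D}(\Lambda)$ is an injective cogenerator of $\mod\Lambda$, which yields $\ell\ell^{t_{\V}}(\Lambda)=\ell\ell^{t_{\V}}(\mathbb{D}(\Lambda))$; and any $M\in\mod\Lambda$ is a quotient of some $\Lambda^{n}$, so the monotonicity lemma gives $\ell\ell^{t_{\V}}(M)\leqslant\ell\ell^{t_{\V}}(\Lambda^{n})=\ell\ell^{t_{\V}}(\Lambda)$.

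I do not foresee a real obstacle; the argument is essentially bookkeeping built on Lemma~\ref{lem3.3}(2). The only step that deserves a moment's care is the verification that $\rad$ preserves epimorphisms, which is occasionally overlooked but follows at once from the formula $\rad M = M\cdot\rad\Lambda$ for modules over an artin algebra.
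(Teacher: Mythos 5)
Your proof is correct and follows essentially the same route as the paper's. The paper's proof is a two-line citation: Lemma~\ref{lem3.3}(2) (that $t_{\V}$ preserves monomorphisms and epimorphisms) together with Lemma~3.4(b)(c) of the reference \cite{HLM2}, which is precisely the package of facts you re-derive by hand: that $F_{t_{\V}}=\rad\circ t_{\V}$ preserves monos and epis, that $\ell\ell^{t_{\V}}$ is therefore non-increasing along monos and epis, and that it is insensitive to taking finite powers $M\mapsto M^k$. Your chain $\ell\ell^{t_{\V}}(E)\leqslant\ell\ell^{t_{\V}}(G^{n})=\ell\ell^{t_{\V}}(G)\leqslant\ell\ell^{t_{\V}}(E^{m})=\ell\ell^{t_{\V}}(E)$ and the specialization to $G=\Lambda$, $E=\mathbb{D}(\Lambda)$ are exactly what the cited lemma encapsulates. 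The one small point you add beyond the paper's citation, and which is worth having on record, is the explicit check that $\rad$ itself preserves epimorphisms via $\rad M=M\cdot\rad\Lambda$; the paper's Lemma~\ref{lem3.3}(2) only speaks of $t_{\V}$, so this is the missing ingredient needed to pass from $t_{\V}$ to $F_{t_{\V}}$.
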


\begin{proof}
By Lemma \ref{lem3.3}(2) and \cite[Lemma 3.4(b)(c)]{HLM2}.
\end{proof}

The following lemma is essentially contained in \cite[Lemma 2.2.4]{Opp07U}.
A similar result also holds true for objects in the bounded derived category of a hereditary abelian category
(see \cite[1.6]{Kr} for details).

\begin{lemma}\label{lem3.5}
Let
\[\xymatrix{
X:\ \cdots   \ar[rr]^{d^{i-2}} & & X^{i-1}\ar[rr]^{d^{i-1}}    & & X^{i}\ar[rr]^{d^{i}}
& & X^{i+1}\ar[rr]^{d^{i+1}}   &&\cdots   & }\]
be a bounded complex in $\mod \Lambda$ with all $X^{i}$ seimisimple.
Then $X\cong \oplus_{i}H^{i}(X)[i]$ and $X\in \langle \Lambda/\rad\Lambda \rangle$
in $D^{b}(\mod \Lambda)$.
\end{lemma}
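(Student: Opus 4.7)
The plan is to exploit the fact that every submodule of a semisimple module is a direct summand. This will let $X$ split, already at the level of complexes, as a direct sum of stalk pieces (contributing $\bigoplus_i H^i(X)[i]$) and contractible two-term pieces (which become zero in the derived category).

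For the first assertion, for each $i$ I would choose a decomposition $X^i = B^i \oplus H^i \oplus C^i$, where $B^i = \im d^{i-1}$, $B^i \oplus H^i = \ker d^i$, and $d^i$ restricts to an isomorphism $C^i \xrightarrow{\sim} B^{i+1}$. Both splittings $\ker d^i \hookrightarrow X^i$ and $B^i \hookrightarrow \ker d^i$ exist because $X^i$, and hence $\ker d^i$, is semisimple. With respect to these decompositions, $d^i$ sends $C^i$ isomorphically onto $B^{i+1}$ and kills $B^i \oplus H^i$. This data exhibits $X$ as a direct sum of complexes: a stalk complex $L := \bigoplus_i H^i[i]$ with zero differentials, and, for each $i$, a two-term complex $K_i$ concentrated in degrees $i$ and $i+1$ whose differential is the isomorphism $C^i \xrightarrow{\sim} B^{i+1}$. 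Each $K_i$ is null-homotopic (the inverse isomorphism supplies the contracting homotopy), hence becomes zero in $D^b(\mod \Lambda)$; and $H^i \cong H^i(X)$ by construction. This yields $X \cong \bigoplus_i H^i(X)[i]$ in $D^b(\mod \Lambda)$.

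For the second assertion, each $H^i(X)$ is a submodule of the semisimple module $X^i$ and is therefore itself semisimple, so it is a direct summand of some finite direct sum of copies of $\Lambda/\rad\Lambda$. Since $X$ is bounded, $\bigoplus_i H^i(X)[i]$ is in turn a direct summand of a finite direct sum of shifts of $\Lambda/\rad\Lambda$, so it belongs to $\langle \Lambda/\rad\Lambda \rangle$ by the definition recalled in Section 2.

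The only potential subtlety is to check that the pointwise decompositions really assemble into a splitting of $X$ as a complex; but this is immediate once one observes that the chosen decomposition of each $X^i$ is tailored so that $d^i$ kills $B^i \oplus H^i$ and acts by the prescribed isomorphism on $C^i$, so no matching condition between different degrees needs to be imposed. Consequently there is no serious obstacle: the content of the lemma is essentially that a bounded complex of semisimples is formal.
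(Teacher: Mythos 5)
Your argument is correct, and its organization differs from the paper's. The paper splits only in the top degree, writing $X^{t-1}\cong \Ker d^{t-1}\oplus C$ with $d^{t-1}$ restricting to an isomorphism $C\xrightarrow{\sim}\Im d^{t-1}$, identifies the stripped-off two-term piece with the stalk $H^{t}(X)[t]$ in $D^{b}(\mod\Lambda)$, and then inducts on the length of the complex. You instead decompose every degree at once as $X^{i}=B^{i}\oplus H^{i}\oplus C^{i}$ and exhibit $X$ already at the level of complexes as the direct sum of the stalk subcomplex $\bigoplus_{i}H^{i}[i]$ and the contractible two-term pieces $C^{i}\xrightarrow{\sim}B^{i+1}$. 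The global decomposition makes the formality of $X$ completely explicit in a single step and dispenses with the induction; the paper's inductive peeling is marginally shorter to typeset. Both arguments rest on precisely the same fact, namely that every submodule of a semisimple module is a direct summand, so the conceptual content is the same, and both leave the membership $X\in\langle\Lambda/\rad\Lambda\rangle$ as a short final observation (which you spell out and the paper leaves implicit).

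One small wording slip: $H^{i}(X)=\Ker d^{i}/\Im d^{i-1}$ is a subquotient of $X^{i}$, not literally a submodule. This is harmless, since you have already produced an isomorphism $H^{i}(X)\cong H^{i}$ with $H^{i}$ a direct summand of the semisimple module $X^{i}$ (and in any case subquotients of semisimple modules are semisimple), so the conclusion that each $H^{i}(X)$, and hence the bounded direct sum $\bigoplus_{i}H^{i}(X)[i]$, lies in $\langle\Lambda/\rad\Lambda\rangle$ stands exactly as you intend.
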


\begin{proof}
By assumption, there exist two integers $r$ and $t$ such that $X^{i}\in \add (\Lambda/\rad\Lambda)$, where $X^{i}=0$ for any
$i\notin [r,t]$, where $[r,t]$ is the integer interval with endpoints $r$ and $t$.
By \cite[Theorem 9.6]{AF92R}, the exact sequence
$$0 \longrightarrow \Ker d^{t-1} \longrightarrow X^{t-1} \longrightarrow  \Im d^{t-1}\longrightarrow 0$$
splits. So the following complex
\[\xymatrix{
0\ar[r]&X^{r}   \ar[r]^{d^{r}} & X^{r+1}\ar[r]^{d^{r+1}}    & X^{r+2}\ar[r]^{d^{r+2}}    &\cdots \ar[r]^{d^{t-2}}
& X^{t-1}\ar[r]^{d^{t-1}}   & X^{t}\ar[r]   &0  &}\]
is the direct sum of the following two complexes
\[\xymatrix{
0\ar[r]&X^{r}   \ar[r]^{d^{r}} & X^{r+1}\ar[r]^{d^{r+1}}    & X^{r+2}\ar[r]^{d^{r+2}}    &\cdots \ar[r]^(0.4){d^{t-2}}
& \Ker d^{t-1} \ar[r]  & 0 \ar[r]  &0  &}\]
and
\[\xymatrix{
0\ar[r]&0   \ar[r]& 0\ar[r]    & 0\ar[r]& \cdots \ar[r]
&\Im d^{t-1}\ar[r]   & X^{t}\ar[r]   &0. &(*)}\]
Note that the complex $(*)$ is isomorphic to the stalk complex $H^{t} (X)[t]$ in $D^{b}(\mod\Lambda)$.
By induction, we have $X\cong \oplus _{i=r}^{t}H^{i}(X)[i]$ in $D^{b}(\mod\Lambda)$.
\end{proof}

\subsection{An upper bound for $\dim D^{b}(\mod \Lambda)$}

We use $\mathcal{S}^{<\infty}$ to denote the set of the
simple modules in $\mod \Lambda$ with finite projective dimension, and use $\mathcal{S}^{\infty}$ to denote the set of the
simple modules in $\mod \Lambda$ with infinite projective dimension. Thus $\mathcal{S}^{<\infty}\cup\mathcal{S}^{\infty}=\mathcal{S}$.
For a subset $\mathcal{V}$ of $\mathcal{S}$, it is easy to see that $\pd\mathfrak{F}(\mathcal{V})\leqslant\pd\mathcal{V}$
and $\id\mathfrak{F}(\mathcal{V})\leqslant\id\mathcal{V}$. We will use this observation in the sequel freely.

\begin{lemma}\label{lem3.6}
Let $\mathcal{V}$ be a subset of $\mathcal{S}^{<\infty}$ and $\pd \mathcal{V}=a$. Then the following complex
\[\xymatrix{
X:\ \cdots   \ar[rr]^{d^{i-2}} & & X^{i-1}\ar[rr]^{d^{i-1}}    & & X^{i}\ar[rr]^{d^{i}}
& & X^{i+1}\ar[rr]^{d^{i+1}}   &&\cdots   &}\]
with all $X^{i}$ in $\mod \Lambda$ induces a complex
\[\xymatrix{
q_{t_{\mathcal{V}}}(X):\ \cdots   \ar[rr]^{q_{t_{\mathcal{V}}}(d^{i-2})} && q_{t_{\mathcal{V}}}(X^{i-1})\ar[rr]^{q_{t_{\mathcal{V}}}(d^{i-1})}
&& q_{t_{\mathcal{V}}}(X^{i}) \ar[rr]^{q_{t_{\mathcal{V}}}(d^{i})}  && q_{t_{\mathcal{V}}}(X^{i+1}) \ar[rr]^{q_{t_{\mathcal{V}}}(d^{i+1})}
& &\cdots   &}\]
such that $\pd H^{i}(q_{t_{\mathcal{V}}}(X))\leqslant a $ for all $i$.
\end{lemma}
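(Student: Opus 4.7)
The plan is to exploit two facts: that $q_{t_{\mathcal{V}}}$ is a functor (so it automatically sends complexes to complexes), and that the target category $\mathfrak{F}(\mathcal{V})$ is well-behaved enough that its cohomologies stay inside it, where the projective dimension bound is easy.

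First I would observe that $q_{t_{\mathcal{V}}} = 1_{\mod \Lambda}/t_{\mathcal{V}}$ is an additive functor on $\mod \Lambda$, so applying it to the differentials $d^i$ gives morphisms $q_{t_{\mathcal{V}}}(d^i)$ satisfying $q_{t_{\mathcal{V}}}(d^i) \circ q_{t_{\mathcal{V}}}(d^{i-1}) = q_{t_{\mathcal{V}}}(d^i \circ d^{i-1}) = 0$. This shows $q_{t_{\mathcal{V}}}(X)$ is indeed a complex. Moreover, by the definition of the torsion radical, each $q_{t_{\mathcal{V}}}(X^i) = X^i/t_{\mathcal{V}}(X^i)$ lies in $\mathfrak{F}(\mathcal{V})$.

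Next I would control the cohomology. By Lemma \ref{lem3.3}(1), $\mathfrak{F}(\mathcal{V})$ is closed under submodules and quotient modules. Since every $q_{t_{\mathcal{V}}}(X^i) \in \mathfrak{F}(\mathcal{V})$, the kernel $\Ker q_{t_{\mathcal{V}}}(d^i)$ is a submodule of $q_{t_{\mathcal{V}}}(X^i)$ and so lies in $\mathfrak{F}(\mathcal{V})$; its quotient $H^i(q_{t_{\mathcal{V}}}(X)) = \Ker q_{t_{\mathcal{V}}}(d^i)/\Im q_{t_{\mathcal{V}}}(d^{i-1})$ then also lies in $\mathfrak{F}(\mathcal{V})$.

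Finally, I would invoke the observation recorded just before the lemma: any module in $\mathfrak{F}(\mathcal{V})$ has projective dimension at most $\pd \mathcal{V} = a$. Consequently $\pd H^i(q_{t_{\mathcal{V}}}(X)) \leqslant a$ for every $i$, which is the desired conclusion. I do not anticipate a serious obstacle here; the only point that could trip one up is verifying that $\mathfrak{F}(\mathcal{V})$ truly absorbs both the passage to submodules (for kernels) and the passage to quotients (for images and for cohomology), but this is supplied directly by Lemma \ref{lem3.3}(1).
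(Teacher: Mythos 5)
Your proof is correct and follows essentially the same route as the paper's: observe that $q_{t_{\mathcal{V}}}$ is an additive functor landing in $\mathfrak{F}(\mathcal{V})$, use Lemma~\ref{lem3.3}(1) to place the cohomology in $\mathfrak{F}(\mathcal{V})$, and then apply the remark preceding the lemma that $\pd\mathfrak{F}(\mathcal{V})\leqslant\pd\mathcal{V}$. The only cosmetic difference is that the paper also names $\Im q_{t_{\mathcal{V}}}(d^{i-1})$ explicitly, whereas you pass directly from the kernel to its quotient; both are equally valid.
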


\begin{proof}
Since $q_{t_{\mathcal{V}}}$ is a covariant functor, we can obtain the complex $q_{t_{\mathcal{V}}}(X)$. For any $i$,
since $q_{t_{\mathcal{V}}}(X^{i})\in \mathcal{ \mathfrak{F}(\mathcal{V})}$, it follows from Lemma \ref{lem3.3}(1) that all
$\Ker q_{t_{\mathcal{V}}}(d^{i})$, $\Im q_{t_{\mathcal{V}}}(d^{i-1})$ and $H^{i}(q_{t_{\mathcal{V}}}(X))$ are in $\mathcal{\mathfrak{F}(\mathcal{V})}$.
Thus we have $\pd H^{i}(q_{t_{\mathcal{V}}}(X))\leqslant a$.
\end{proof}

\begin{lemma}\label{lem3.7}
Let $\mathcal{V}$ be a subset of $\mathcal{S}^{<\infty}$ and $\pd \mathcal{V}=a$.
For a bounded complex $X=(X^i,d^i)$ in $\mod \Lambda$, if $\ell\ell^{t_{\mathcal{V}}}(\Lambda)=n$,
then $F^{n}_{t_{\mathcal{V}}}(X)\in  \langle  \Lambda\rangle_{a+1}$.
\end{lemma}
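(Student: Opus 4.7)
The plan is to show that every term of the complex $F^{n}_{t_{\mathcal{V}}}(X)$ lies in $\mathfrak{F}(\mathcal{V})$, deduce that its homology modules also lie in $\mathfrak{F}(\mathcal{V})$, and then apply Lemma \ref{lem2.3} after noting that $\pd \mathfrak{F}(\mathcal{V}) \leqslant a$.

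First I would analyse each term of $F^{n}_{t_{\mathcal{V}}}(X)$. By Proposition \ref{prop3.4} we have $\ell\ell^{t_{\mathcal{V}}}(X^{i}) \leqslant \ell\ell^{t_{\mathcal{V}}}(\Lambda) = n$ for every $i$. Combining this with Lemma \ref{lem2.6} (taking $j=n$) yields $\ell\ell^{t_{\mathcal{V}}}(F^{n}_{t_{\mathcal{V}}}(X^{i})) = 0$, which by definition of $\ell\ell^{t_{\mathcal{V}}}$ is the same as $t_{\mathcal{V}}(F^{n}_{t_{\mathcal{V}}}(X^{i})) = 0$. Proposition \ref{prop3.1}(1) then gives $F^{n}_{t_{\mathcal{V}}}(X^{i}) \in \mathfrak{F}(\mathcal{V})$ for every $i$.

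Next I would invoke Lemma \ref{lem3.3}(1), which guarantees that $\mathfrak{F}(\mathcal{V})$ is closed under submodules and quotients. Consequently, all kernels $\Ker F^{n}_{t_{\mathcal{V}}}(d^{i})$, images $\Im F^{n}_{t_{\mathcal{V}}}(d^{i-1})$ and homologies $H^{i}(F^{n}_{t_{\mathcal{V}}}(X))$ lie in $\mathfrak{F}(\mathcal{V})$. A straightforward induction on composition length using $\pd \mathcal{V} = a$ shows $\pd \mathfrak{F}(\mathcal{V}) \leqslant a$, so every homology module of $F^{n}_{t_{\mathcal{V}}}(X)$ has projective dimension at most $a$. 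Since $F^{n}_{t_{\mathcal{V}}}$ sends zero to zero, $F^{n}_{t_{\mathcal{V}}}(X)$ is still a bounded complex, so Lemma \ref{lem2.3} applied inside $\mod \Lambda$ produces $F^{n}_{t_{\mathcal{V}}}(X) \in \langle \mathcal{P} \rangle_{a+1}$, where $\mathcal{P}$ is the subcategory of projective $\Lambda$-modules. Finally, every finitely generated projective is a direct summand of some $\Lambda^{k}$, so $\langle \mathcal{P} \rangle = \langle \Lambda \rangle$ and thus $F^{n}_{t_{\mathcal{V}}}(X) \in \langle \Lambda \rangle_{a+1}$.

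The main obstacle is conceptual rather than computational: one needs to see that the numerical hypothesis on $\Lambda$ alone (namely $\ell\ell^{t_{\mathcal{V}}}(\Lambda) = n$) propagates to every module in $\mod \Lambda$ via Proposition \ref{prop3.4}, so that after applying $F_{t_{\mathcal{V}}}$ exactly $n$ times the entire complex is pushed into the torsion-free class $\mathfrak{F}(\mathcal{V})$, where projective dimensions are uniformly bounded by $a$. Once this observation is in hand, Lemmas \ref{lem2.3} and \ref{lem3.3}(1) do the rest.
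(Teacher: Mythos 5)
Your proof is correct and follows essentially the same route as the paper's: Proposition \ref{prop3.4} bounds $\ell\ell^{t_{\mathcal{V}}}(X^i)$, Lemma \ref{lem2.6} plus Proposition \ref{prop3.1}(1) puts $F^{n}_{t_{\mathcal{V}}}(X^i)$ into $\mathfrak{F}(\mathcal{V})$, Lemma \ref{lem3.3}(1) passes this to the homology, and Lemma \ref{lem2.3} finishes. The only additions (justifying $\pd\mathfrak{F}(\mathcal{V})\leqslant a$ by induction on length, and noting $\langle\mathcal{P}\rangle=\langle\Lambda\rangle$) simply make explicit observations the paper leaves implicit.
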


\begin{proof}
By Proposition \ref{prop3.4}, we have
$\ell\ell^{t_{\mathcal{V}}}(X^{i})\leqslant \ell\ell^{t_{\mathcal{V}}}(\Lambda)=n$ for all $i$.
Then by Lemma \ref{lem2.6} and Proposition \ref{prop3.1}(1),
we have $\ell\ell^{t_{\mathcal{V}}}(F^{n}_{t_{\mathcal{V}}}(X^{i}))=0$ and
$F^{n}_{t_{\mathcal{V}}}(X^{i})\in \mathfrak{F}(\mathcal{V})$, which implies $H^{i}(F^{n}_{t_{\mathcal{V}}}(X))\in\mathfrak{F}(\mathcal{V})$
by Lemma \ref{lem3.3}(1), and hence $\pd H^{i}(F^{n}_{t_{\mathcal{V}}}(X))\leqslant a$ for all $i$.
It follows from Lemma \ref{lem2.3} that $F^{n}_{t_{\mathcal{V}}}(X)\in \langle \Lambda \rangle_{a+1}$.
\end{proof}

We now are in a position to prove the following

\begin{theorem}\label{thm3.8}
Let $\mathcal{V}$ be a subset of $\mathcal{S}^{<\infty}$ and $\pd \mathcal{V}=a$.
If $\ell\ell^{t_{\mathcal{V}}}(\Lambda)=n$, then $$\dim D^{b}(\mod \Lambda) \leqslant (a+2)(n+1)-2.$$
\end{theorem}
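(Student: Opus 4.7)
The plan is to show that for $G:=\Lambda\oplus(\Lambda/\rad\Lambda)$ every bounded complex $X$ in $\mod\Lambda$ belongs to $\langle G\rangle_{(a+2)(n+1)-1}$, from which the bound on $\dim D^{b}(\mod\Lambda)$ follows directly from Definition \ref{def2.1}. Fix such an $X$ and set $X^{(i)}:=F^{i}_{t_{\mathcal{V}}}(X)$ for $0\leqslant i\leqslant n$. I will bound $X^{(i)}$ in $\langle G\rangle_{\bullet}$ by downward induction on $i$, starting at $i=n$ with Lemma \ref{lem3.7}, which gives $X^{(n)}\in\langle\Lambda\rangle_{a+1}\subseteq\langle G\rangle_{a+1}$, and climbing back up to $X=X^{(0)}$.

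The inductive step uses two short exact sequences of bounded complexes, produced termwise by the subfunctors $t_{\mathcal{V}}$ and $\rad\circ t_{\mathcal{V}}$ of the identity:
\[
0\longrightarrow t_{\mathcal{V}}(X^{(i)})\longrightarrow X^{(i)}\longrightarrow q_{t_{\mathcal{V}}}(X^{(i)})\longrightarrow 0,
\]
\[
0\longrightarrow X^{(i+1)}\longrightarrow t_{\mathcal{V}}(X^{(i)})\longrightarrow t_{\mathcal{V}}(X^{(i)})/X^{(i+1)}\longrightarrow 0.
\]
Both give distinguished triangles in $D^{b}(\mod\Lambda)$. The ``outer'' terms are well controlled: first, $q_{t_{\mathcal{V}}}(X^{(i)})$ has every term in $\mathfrak{F}(\mathcal{V})$, so by Lemma \ref{lem3.6} each of its cohomologies has projective dimension at most $a$, whence by Lemma \ref{lem2.3} it lies in $\langle\Lambda\rangle_{a+1}\subseteq\langle G\rangle_{a+1}$; second, by Proposition \ref{prop3.1}(3) applied to $t_{\mathcal{V}}(X^{(i)})$, the quotient $t_{\mathcal{V}}(X^{(i)})/X^{(i+1)}=t_{\mathcal{V}}(X^{(i)})/\rad(t_{\mathcal{V}}(X^{(i)}))$ is a bounded complex of semisimples with composition factors in $\add\mathcal{V}'\subseteq\add(\Lambda/\rad\Lambda)$, so Lemma \ref{lem3.5} places it in $\langle\Lambda/\rad\Lambda\rangle\subseteq\langle G\rangle_{1}$.

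Now feeding these into Lemma \ref{lem2.2}, if $X^{(i+1)}\in\langle G\rangle_{p}$ then the second triangle yields $t_{\mathcal{V}}(X^{(i)})\in\langle G\rangle_{p+1}$, and the first triangle then yields $X^{(i)}\in\langle G\rangle_{p+1+(a+1)}=\langle G\rangle_{p+a+2}$. Iterating this $n$ times, starting from $p=a+1$, produces
\[
X=X^{(0)}\in\langle G\rangle_{(a+1)+n(a+2)}=\langle G\rangle_{(a+2)(n+1)-1},
\]
which is the desired estimate.

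The main obstacle is purely bookkeeping: one must verify that the two displayed sequences really are short exact sequences of bounded complexes in $\mod\Lambda$, i.e.\ that $t_{\mathcal{V}}$ and $\rad\circ t_{\mathcal{V}}$, applied termwise, yield honest subcomplexes of $X^{(i)}$ with the stated quotient complexes. This is exactly where Lemma \ref{lem3.3}(2) (preservation of monomorphisms and epimorphisms) and Proposition \ref{prop3.1}(3) (identification of the top) are used, and once these are in place the additive arithmetic above closes the proof.
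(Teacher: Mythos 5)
Your proof is correct and follows essentially the same route as the paper's: the same two termwise short exact sequences (torsion and radical filtration), the same invocation of Lemmas \ref{lem3.6}, \ref{lem2.3}, \ref{lem3.5}, \ref{lem3.7}, and \ref{lem2.2}, with the generator $\Lambda\oplus(\Lambda/\rad\Lambda)$; you merely package the iteration as downward induction on $i$ where the paper iterates by repeated substitution. The paper additionally isolates the degenerate cases $\mathcal{V}=\varnothing$ and $n=0$ and reduces them to Theorem \ref{thm1.1}(1)(2), whereas your argument absorbs them uniformly (using the convention $\pd\varnothing=-1$ and $\langle\mathcal{I}\rangle_0=0$), which works out but is worth a remark for the reader.
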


\begin{proof}
If $\mathcal{V}=\varnothing$, then $\ell\ell^{t_{\mathcal{V}}}(\Lambda)=\LL(\Lambda)$
by Proposition \ref{prop3.2}. Now the assertion follows from Theorem \ref{thm1.1}(1).

If $n=0$, that is, $t_{\mathcal{V}}(\Lambda)=0$, then $\Lambda\in \mathfrak{F}(\mathcal{V})$ by Proposition \ref{prop3.1}(1).
Since $\mathcal{V}$ contains every simple module by the definition of $\mathfrak{F}(\mathcal{V})$ and since the composition
series of $\Lambda$ does, we have $\mathcal{V}=\mathcal{S}$
and $\gldim \Lambda=a$. It follows from Theorem \ref{thm1.1}(2) that $\dim D^{b}(\mod \Lambda)\leqslant a$.

Let $X\in D^{b}(\mod \Lambda)$ and $n\geqslant1$.
Since both $q_{t_{\mathcal{V}}}$ and $t_{\mathcal{V}}$ are covariant functors, we have that
$$0\longrightarrow t_{\mathcal{V}}(X)\longrightarrow X \longrightarrow q_{t_{\mathcal{V}}}(X)\longrightarrow 0$$
is a short exact sequence of complexes.
For any $Y\in D^{b}(\mod \Lambda)$, we have the following short exact sequence of complexes
$$0\longrightarrow \rad Y\longrightarrow Y \longrightarrow \top Y\longrightarrow 0.$$
Now by letting $Y=t_{\mathcal{V}}(X)$, we have
\begin{align*}
\langle X\rangle&\subseteq\langle t_{\mathcal{V}}(X) \rangle \diamond \langle q_{t_{\mathcal{V}}}(X)\rangle\\
&\subseteq \langle t_{\mathcal{V}}(X) \rangle \diamond \langle \Lambda\rangle_{a+1}\ \ \text{(by Lemmas \ref{lem3.6} and \ref{lem2.3})}\\
&\subseteq\langle \rad t_{\mathcal{V}}(X)\rangle \diamond  \langle \top t_{\mathcal{V}}(X)\rangle\diamond \langle \Lambda\rangle_{a+1}\\
&=\langle F_{t_{\mathcal{V}}}(X)\rangle \diamond  \langle \top t_{\mathcal{V}}(X)\rangle\diamond \langle \Lambda\rangle_{a+1}\\
&\subseteq\langle F_{t_{\mathcal{V}}}(X)\rangle \diamond  \langle  \Lambda / \rad\Lambda\rangle\diamond \langle \Lambda\rangle_{a+1}\ \ \text{(by Lemma \ref{lem3.5})}\\
&\subseteq\langle F_{t_{\mathcal{V}}}(X)\rangle \diamond  \langle  \Lambda\oplus (\Lambda / \rad\Lambda)\rangle_{a+2}.\ \ \text{(by Lemma \ref{lem2.2})}
\end{align*}
By replacing $X$ with $F^i_{t_{\mathcal{V}}}(X)$ for any $1\leqslant i\leqslant n-1$, we get
$$\langle X\rangle \subseteq\langle F^{n}_{t_{\mathcal{V}}}(X)\rangle \diamond \langle \Lambda\oplus (\Lambda/\rad\Lambda)\rangle_{n(a+2)}.$$
By Lemma \ref{lem3.7}, we have
$F^{n}_{t_{\mathcal{V}}}(X)\in  \langle  \Lambda\rangle_{a+1}$. Thus
$$ \langle X\rangle \subseteq   \langle  \Lambda\oplus(\Lambda/\rad\Lambda)\rangle_{(n+1)(a+2)-1}.$$
It follows that $D^{b}(\mod \Lambda)=\langle  \Lambda\oplus (\Lambda / \rad\Lambda)\rangle_{(a+2)(n+1)-1}$ and
$$\dim D^{b}(\mod \Lambda) \leqslant (a+2)(n+1)-2.$$
\end{proof}

We use $\S_{inj}^{<\infty}$ to denote the set of the simple modules in $\mod \Lambda$ with finite injective dimension.
The following two lemmas are dual to Lemmas \ref{lem3.6} and \ref{lem3.7} respectively, we omit their proofs.

\begin{lemma}\label{lem3.9}
Let $\mathcal{V}$ be a subset of $\mathcal{S}_{inj}^{<\infty}$ and $\id \mathcal{V}=c$.
Then the following complex
\[\xymatrix{
X:\ \cdots   \ar[rr]^{d^{i-2}} & & X^{i-1}\ar[rr]^{d^{i-1}}    & & X^{i}\ar[rr]^{d^{i}}
& & X^{i+1}\ar[rr]^{d^{i+1}}   &&\cdots   &}\]
with all $X^{i}$ in $\mod \Lambda$ induces a complex
\[\xymatrix{
q_{t_{\mathcal{V}}}(X):\ \cdots   \ar[rr]^{q_{t_{\mathcal{V}}}(d^{i-2})} && q_{t_{\mathcal{V}}}(X^{i-1})\ar[rr]^{q_{t_{\mathcal{V}}}(d^{i-1})}
&& q_{t_{\mathcal{V}}}(X^{i}) \ar[rr]^{q_{t_{\mathcal{V}}}(d^{i})}  && q_{t_{\mathcal{V}}}(X^{i+1}) \ar[rr]^{q_{t_{\mathcal{V}}}(d^{i+1})}
& &\cdots   &}\]
such that $\id H^{i}(q_{t_{\mathcal{V}}}(X))\leqslant c $ for all $i$.
\end{lemma}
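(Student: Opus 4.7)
The plan is to mirror the proof of Lemma \ref{lem3.6}, simply replacing projective dimension by injective dimension throughout. The crucial observation is that the closure properties of $\mathfrak{F}(\mathcal{V})$ established in Lemma \ref{lem3.3}(1) are self-dual (closure under both submodules and quotients), so the same argument transports without any genuine change.

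First, since $q_{t_{\mathcal{V}}}$ is a covariant endofunctor on $\mod \Lambda$, applying it termwise to $X$ and to its differentials $d^{i}$ yields a bounded complex $q_{t_{\mathcal{V}}}(X)$ with components $q_{t_{\mathcal{V}}}(X^{i})$ and differentials $q_{t_{\mathcal{V}}}(d^{i})$. By the definition of $q_{t_{\mathcal{V}}}=1_{\mod\Lambda}/t_{\mathcal{V}}$, every $q_{t_{\mathcal{V}}}(X^{i})$ lies in $\mathfrak{F}(\mathcal{V})$.

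Next, I would invoke Lemma \ref{lem3.3}(1): $\mathfrak{F}(\mathcal{V})$ is closed under extensions, submodules and quotient modules. Applied to the components $q_{t_{\mathcal{V}}}(X^{i}) \in \mathfrak{F}(\mathcal{V})$, this forces $\Ker q_{t_{\mathcal{V}}}(d^{i})$ (a submodule), $\Im q_{t_{\mathcal{V}}}(d^{i-1})$ (a quotient), and hence $H^{i}(q_{t_{\mathcal{V}}}(X)) = \Ker q_{t_{\mathcal{V}}}(d^{i})/\Im q_{t_{\mathcal{V}}}(d^{i-1})$ to lie in $\mathfrak{F}(\mathcal{V})$ for every $i$.

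Finally, using the observation stated in the paragraph preceding Lemma \ref{lem3.6} that $\id \mathfrak{F}(\mathcal{V}) \leqslant \id \mathcal{V} = c$, we obtain $\id H^{i}(q_{t_{\mathcal{V}}}(X)) \leqslant c$ for all $i$. There is no substantive obstacle here: the only point one must check is that the dualization preserves validity, but since both submodules and quotients stay inside $\mathfrak{F}(\mathcal{V})$, the injective-dimension bound propagates to cohomology in exactly the same way the projective-dimension bound did in Lemma \ref{lem3.6}.
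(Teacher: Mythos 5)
Your proof is correct and is exactly the argument the paper intends (the paper omits the proof of Lemma~\ref{lem3.9}, stating only that it is dual to Lemma~\ref{lem3.6}): the same covariant functor $q_{t_{\mathcal{V}}}$ is applied termwise, the closure properties from Lemma~\ref{lem3.3}(1) place $H^{i}(q_{t_{\mathcal{V}}}(X))$ in $\mathfrak{F}(\mathcal{V})$, and then the observation $\id\mathfrak{F}(\mathcal{V})\leqslant\id\mathcal{V}$ is invoked instead of its projective counterpart. Your remark that no genuine dualization is needed, because $\mathfrak{F}(\mathcal{V})$ is closed under both submodules and quotients, is a correct and useful clarification of what ``dual'' means here.
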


\begin{lemma}\label{lem3.10}
Let $\mathcal{V}$ be a subset of $\mathcal{S}_{inj}^{<\infty}$ and $\id \mathcal{V}=c$.
For a bounded complex $X=(X^i,d^i)$ in $\mod \Lambda$, if $\ell\ell^{t_{\mathcal{V}}}(\mathbb{D}(\Lambda))=n$,
then $F^{n}_{t_{\mathcal{V}}}(X)\in  \langle \mathbb{D}(\Lambda)\rangle_{c+1}$.
\end{lemma}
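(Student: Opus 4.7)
The plan is to dualize the proof of Lemma \ref{lem3.7} verbatim, replacing the generator $\Lambda$ by the cogenerator $\mathbb{D}(\Lambda)$, replacing projective dimension by injective dimension, and invoking Lemma \ref{lem2.4} in place of Lemma \ref{lem2.3}. All the intermediate ingredients (Proposition \ref{prop3.4}, Lemma \ref{lem2.6}, Proposition \ref{prop3.1}(1), and Lemma \ref{lem3.3}(1)) are self-dual in the sense that they make no distinction between $\Lambda$ and $\mathbb{D}(\Lambda)$ or between $\pd$ and $\id$, so the structure of the argument transfers without modification.

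First I would use that $\mathbb{D}(\Lambda)$ is a cogenerator of $\mod \Lambda$: Proposition \ref{prop3.4} then yields $\ell\ell^{t_{\mathcal{V}}}(X^{i}) \leqslant \ell\ell^{t_{\mathcal{V}}}(\mathbb{D}(\Lambda)) = n$ for every $i$. Applying Lemma \ref{lem2.6} with $\alpha = t_{\mathcal{V}}$ and $\beta = F_{t_{\mathcal{V}}}$, I obtain $\ell\ell^{t_{\mathcal{V}}}(F^{n}_{t_{\mathcal{V}}}(X^{i})) = 0$, and Proposition \ref{prop3.1}(1) then places each $F^{n}_{t_{\mathcal{V}}}(X^{i})$ in $\mathfrak{F}(\mathcal{V})$. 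Since $\mathfrak{F}(\mathcal{V})$ is closed under submodules and quotient modules by Lemma \ref{lem3.3}(1), it is closed under kernels, images, and hence under homology of complexes with terms in $\mathfrak{F}(\mathcal{V})$; so $H^{i}(F^{n}_{t_{\mathcal{V}}}(X)) \in \mathfrak{F}(\mathcal{V})$ for every $i$. Combined with the standing observation that $\id \mathfrak{F}(\mathcal{V}) \leqslant \id \mathcal{V} = c$, this gives $\id H^{i}(F^{n}_{t_{\mathcal{V}}}(X)) \leqslant c$ for all $i$.

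The final step is to apply Lemma \ref{lem2.4} (with $n$ replaced by $c$) to the bounded complex $F^{n}_{t_{\mathcal{V}}}(X)$, which lands it in $\langle \mathcal{E} \rangle_{c+1} \subseteq D^{b}(\mod \Lambda)$, where $\mathcal{E}$ is the subcategory of injective modules. The only non-routine verification is the identification $\langle \mathcal{E} \rangle = \langle \mathbb{D}(\Lambda) \rangle$, but over an artin algebra every injective in $\mod \Lambda$ is a direct summand of a finite direct sum of copies of $\mathbb{D}(\Lambda)$, so this is immediate from the definition of $\langle \cdot \rangle$. I do not anticipate any real obstacle, since the argument is a mechanical dualization; the only point one must be slightly careful about is using the cogenerator property of $\mathbb{D}(\Lambda)$ (rather than the generator property of $\Lambda$) when invoking Proposition \ref{prop3.4}.
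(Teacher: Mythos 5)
Your proposal is correct and is exactly the dualization the paper has in mind: the paper omits the proof of Lemma \ref{lem3.10} precisely because it is dual to Lemma \ref{lem3.7}, and your argument reproduces that dualization faithfully, using Proposition \ref{prop3.4} (with $\mathbb{D}(\Lambda)$ as cogenerator), Lemma \ref{lem2.6}, Proposition \ref{prop3.1}(1), Lemma \ref{lem3.3}(1), and Lemma \ref{lem2.4} in place of Lemma \ref{lem2.3}. The closing observation that $\langle \mathcal{E} \rangle = \langle \mathbb{D}(\Lambda) \rangle$ over an artin algebra is a routine but worth-stating step that the paper leaves implicit.
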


The following result is dual to Theorem \ref{thm3.8}.

\begin{theorem}\label{thm3.11}
Let $\mathcal{V}$ be a subset of $\mathcal{S}_{inj}^{<\infty}$ and $\id \mathcal{V}=c$.
If $\ell\ell^{t_{\mathcal{V}}}(\mathbb{D}(\Lambda))=n$, then $$\dim D^{b}(\mod \Lambda ) \leqslant (c+2)(n+1)-2.$$
\end{theorem}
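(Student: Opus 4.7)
The plan is to dualize the proof of Theorem \ref{thm3.8} word-for-word, replacing $\Lambda$ by $\mathbb{D}(\Lambda)$, projective dimension by injective dimension, and using Lemmas \ref{lem3.9}, \ref{lem3.10} and \ref{lem2.4} in place of Lemmas \ref{lem3.6}, \ref{lem3.7} and \ref{lem2.3}. Everything else (the functorial short exact sequence $0\to t_{\V}(X)\to X\to q_{t_\V}(X)\to 0$, the top/radical sequence for $t_\V(X)$, Lemma \ref{lem3.5}, Lemma \ref{lem2.2}) carries over verbatim since neither the torsion-pair formalism nor the diamond calculus favours the projective side.

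First I would handle the two boundary cases exactly as in Theorem \ref{thm3.8}. If $\mathcal{V}=\varnothing$, then Proposition \ref{prop3.2} together with Proposition \ref{prop3.4} gives $\ell\ell^{t_\V}(\mathbb{D}(\Lambda))=\LL(\mathbb{D}(\Lambda))=\LL(\Lambda)=n$, and Theorem \ref{thm1.1}(1) yields the bound. If $n=0$, then $t_\V(\mathbb{D}(\Lambda))=0$, so $\mathbb{D}(\Lambda)\in\mathfrak{F}(\V)$ by Proposition \ref{prop3.1}(1); because $\mathbb{D}(\Lambda)$ is an injective cogenerator, every simple right $\Lambda$-module occurs as a composition factor of $\mathbb{D}(\Lambda)$, forcing $\V=\mathcal{S}$ and hence $\gldim\Lambda=\id\mathcal{S}=c$, so Theorem \ref{thm1.1}(2) applies.

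For the main case $n\geqslant 1$, take any $X\in D^b(\mod\Lambda)$. From the short exact sequence of complexes $0\to t_\V(X)\to X\to q_{t_\V}(X)\to 0$, Lemma \ref{lem3.9} gives $\id H^i(q_{t_\V}(X))\leqslant c$ for all $i$, so by Lemma \ref{lem2.4} we have $q_{t_\V}(X)\in\langle\mathbb{D}(\Lambda)\rangle_{c+1}$. Combining with the termwise top/radical sequence for $t_\V(X)$ and applying Lemma \ref{lem3.5} to $\top\, t_\V(X)$, then Lemma \ref{lem2.2}, we get
$$\langle X\rangle\subseteq\langle F_{t_\V}(X)\rangle\diamond\langle\mathbb{D}(\Lambda)\oplus(\Lambda/\rad\Lambda)\rangle_{c+2}.$$
Iterating this with $X$ replaced by $F^i_{t_\V}(X)$ for $1\leqslant i\leqslant n-1$, and then using Lemma \ref{lem3.10} at the last step to place $F^n_{t_\V}(X)\in\langle\mathbb{D}(\Lambda)\rangle_{c+1}$, we conclude
$$\langle X\rangle\subseteq\langle\mathbb{D}(\Lambda)\oplus(\Lambda/\rad\Lambda)\rangle_{(c+2)(n+1)-1},$$
which gives $\dim D^b(\mod\Lambda)\leqslant(c+2)(n+1)-2$.

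The arithmetic on the diamond-levels is the only place requiring care: one must check that each of the $n$ iterations contributes exactly $c+2$ (from the $\langle\mathbb{D}(\Lambda)\rangle_{c+1}$ factor plus the one-step top contribution), and that the terminal application of Lemma \ref{lem3.10} contributes an additional $c+1$, totalling $n(c+2)+(c+1)=(n+1)(c+2)-1$. Beyond this bookkeeping, the argument is essentially formal, and I would simply note the duality and omit repetitive details.
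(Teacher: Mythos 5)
Your proposal is correct and follows essentially the same argument as the paper's own proof, which likewise proceeds by a direct dualization of Theorem \ref{thm3.8}: the same two boundary cases ($\mathcal{V}=\varnothing$ via Proposition \ref{prop3.2} and Theorem \ref{thm1.1}(1); $n=0$ via Proposition \ref{prop3.1}(1) and Theorem \ref{thm1.1}(2)), followed by the identical chain of diamond-inclusions using Lemmas \ref{lem3.9}, \ref{lem2.4}, \ref{lem3.5}, \ref{lem2.2}, the iteration over $F^i_{t_{\mathcal{V}}}(X)$, and the terminal application of Lemma \ref{lem3.10}. The diamond-level bookkeeping $n(c+2)+(c+1)=(n+1)(c+2)-1$ is exactly what appears in the paper.
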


\begin{proof}
Though the proof is similar to that of Theorem \ref{thm3.8}, we still give it here for the readers' convenience.

If $\mathcal{V}=\varnothing$, then $\ell\ell^{t_{\mathcal{V}}}(\mathbb{D}(\Lambda))=\LL(\mathbb{D}(\Lambda))=\LL(\Lambda)$
by Proposition \ref{prop3.2}. Now the assertion follows from Theorem \ref{thm1.1}(1).

If $n=0$, that is, $t_{\mathcal{V}}(\mathbb{D}(\Lambda))=0$, then $\mathbb{D}(\Lambda)\in \mathfrak{F}(\mathcal{V})$ by Proposition \ref{prop3.1}(1).
Since $\mathcal{V}$ contains every simple module by the definition of $\mathfrak{F}(\mathcal{V})$ and since the composition
series of $\mathbb{D}(\Lambda)$ does, we have $\mathcal{V}=\mathcal{S}$
and $\gldim \Lambda=c$. It follows from Theorem \ref{thm1.1}(2) that $\dim D^{b}(\mod \Lambda)\leqslant c$.

Let $X,Y\in D^{b}(\mod \Lambda)$ and $n\geqslant1$. Just like the argument in Theorem \ref{thm3.8}, we have the
following two short exact sequence of complexes
$$0\longrightarrow t_{\mathcal{V}}(X)\longrightarrow X \longrightarrow q_{t_{\mathcal{V}}}(X)\longrightarrow 0,$$
$$0\longrightarrow \rad Y\longrightarrow Y \longrightarrow \top Y\longrightarrow 0.$$
Now by letting $Y=t_{\mathcal{V}}(X)$, we have
\begin{align*}
\langle X\rangle&\subseteq\langle t_{\mathcal{V}}(X) \rangle \diamond \langle q_{t_{\mathcal{V}}}(X)\rangle\\
&\subseteq \langle t_{\mathcal{V}}(X) \rangle \diamond \langle \mathbb{D}(\Lambda)\rangle_{c+1}\ \ \text{(by Lemmas \ref{lem3.9} and \ref{lem2.4})}\\
&\subseteq\langle \rad t_{\mathcal{V}}(X)\rangle \diamond  \langle \top t_{\mathcal{V}}(X)\rangle\diamond \langle \mathbb{D}(\Lambda)\rangle_{c+1}\\
&=\langle F_{t_{\mathcal{V}}}(X)\rangle \diamond  \langle \top t_{\mathcal{V}}(X)\rangle\diamond \langle \mathbb{D}(\Lambda)\rangle_{c+1}\\
&\subseteq\langle F_{t_{\mathcal{V}}}(X)\rangle \diamond  \langle  \Lambda / \rad\Lambda\rangle\diamond \langle \mathbb{D}(\Lambda)\rangle_{c+1}\ \ \text{(by Lemma \ref{lem3.5})}\\
&\subseteq\langle F_{t_{\mathcal{V}}}(X)\rangle \diamond  \langle  \mathbb{D}(\Lambda)\oplus (\Lambda / \rad\Lambda)\rangle_{c+2}.\ \ \text{(by Lemma \ref{lem2.2})}
\end{align*}
By replacing $X$ with $F^i_{t_{\mathcal{V}}}(X)$ for any $1\leqslant i\leqslant n-1$, we get
$$\langle X\rangle \subseteq\langle F^{n}_{t_{\mathcal{V}}}(X)\rangle \diamond \langle \mathbb{D}(\Lambda)\oplus (\Lambda/\rad\Lambda)\rangle_{n(c+2)}.$$
By Lemma \ref{lem3.10}, we have
$F^{n}_{t_{\mathcal{V}}}(X)\in  \langle  \mathbb{D}(\Lambda)\rangle_{c+1}$. Thus
$$ \langle X\rangle \subseteq   \langle  \mathbb{D}(\Lambda)\oplus(\Lambda/\rad\Lambda)\rangle_{(n+1)(c+2)-1}.$$
It follows that $D^{b}(\mod \Lambda)=\langle  \mathbb{D}(\Lambda)\oplus (\Lambda / \rad\Lambda)\rangle_{(c+2)(n+1)-1}$ and
$$\dim D^{b}(\mod \Lambda) \leqslant (c+2)(n+1)-2.$$
\end{proof}

Combining Theorems \ref{thm3.8} and \ref{thm3.11}, we get the following

\begin{theorem}\label{thm3.12}
Let $\mathcal{V}$ be a subset of $\mathcal{S}$ and $\min\{\pd\mathcal{V}, \id\mathcal{V}\}=d$.
If $\ell\ell^{t_{\mathcal{V}}}(\Lambda)=n$, then $$\dim D^{b}(\mod \Lambda) \leqslant (d+2)(n+1)-2.$$
\end{theorem}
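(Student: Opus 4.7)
The plan is to deduce Theorem \ref{thm3.12} directly from Theorems \ref{thm3.8} and \ref{thm3.11} by splitting into two cases according to which of $\pd\mathcal{V}$ or $\id\mathcal{V}$ realizes the minimum $d$. In either case the point is simply to verify that the hypothesis of the relevant previous theorem is satisfied, and to translate the hypothesis $\ell\ell^{t_{\mathcal{V}}}(\Lambda)=n$ into the form used by that theorem.

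First, suppose $d=\pd\mathcal{V}$. Then every simple module in $\mathcal{V}$ has projective dimension at most $d<\infty$, so $\mathcal{V}\subseteq \mathcal{S}^{<\infty}$. I can therefore apply Theorem \ref{thm3.8} with $a=d$, and the given hypothesis $\ell\ell^{t_{\mathcal{V}}}(\Lambda)=n$ immediately yields
\[\dim D^{b}(\mod\Lambda)\leqslant (d+2)(n+1)-2.\]

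Next, suppose $d=\id\mathcal{V}$. Then $\mathcal{V}\subseteq \mathcal{S}_{inj}^{<\infty}$, and I would like to invoke Theorem \ref{thm3.11} with $c=d$. The only gap is that Theorem \ref{thm3.11} is phrased in terms of $\ell\ell^{t_{\mathcal{V}}}(\mathbb{D}(\Lambda))$ rather than $\ell\ell^{t_{\mathcal{V}}}(\Lambda)$. This gap is closed by Proposition \ref{prop3.4}, which gives $\ell\ell^{t_{\mathcal{V}}}(\Lambda)=\ell\ell^{t_{\mathcal{V}}}(\mathbb{D}(\Lambda))$ (since $\Lambda$ is a generator and $\mathbb{D}(\Lambda)$ is a cogenerator for $\mod\Lambda$). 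Hence $\ell\ell^{t_{\mathcal{V}}}(\mathbb{D}(\Lambda))=n$, and Theorem \ref{thm3.11} yields the same bound $(d+2)(n+1)-2$.

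Since $d=\min\{\pd\mathcal{V},\id\mathcal{V}\}$ forces at least one of the two cases to hold, the theorem follows. There is no real obstacle here: the edge case $\mathcal{V}=\varnothing$ (where $d=-1$) is already absorbed by Theorems \ref{thm3.8} and \ref{thm3.11} via Proposition \ref{prop3.2}, and the reduction to $\mathbb{D}(\Lambda)$ in the injective case is purely a matter of citing Proposition \ref{prop3.4}. So the proof is essentially a single line of case analysis once the hypothesis is rewritten appropriately.
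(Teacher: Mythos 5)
Your proof is correct and follows exactly the same route as the paper: a case split on which of $\pd\mathcal{V}$ or $\id\mathcal{V}$ attains the minimum, invoking Theorem \ref{thm3.8} or Theorem \ref{thm3.11} respectively, with Proposition \ref{prop3.4} supplying $\ell\ell^{t_{\mathcal{V}}}(\Lambda)=\ell\ell^{t_{\mathcal{V}}}(\mathbb{D}(\Lambda))$ in the second case. The only thing the paper handles that you leave implicit is the degenerate case $d=\infty$, where $\mathcal{V}$ need not lie in $\mathcal{S}^{<\infty}$ or $\mathcal{S}_{inj}^{<\infty}$ and neither branch of your dichotomy applies, but then the claimed bound is $\infty$ so the statement holds vacuously.
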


\begin{proof}
The case for $d=\infty$ is trivial. Since $\ell\ell^{t_{\mathcal{V}}}(\Lambda)=\ell\ell^{t_{\mathcal{V}}}(\mathbb{D}(\Lambda))$
by Proposition \ref{prop3.4}, the case for $d<\infty$ follows from Theorems \ref{thm3.8} and \ref{thm3.11}.
\end{proof}

\subsection{An upper bound for $\dim D_{sg}^{b}(\mod \Lambda)$}

Recall that the {\bf singularity category} $D^{b}_{sg}(\mod \Lambda)$ of $\mod \Lambda$ is defined as $D^{b}(\mod \Lambda)/ K^{b}(\proj \Lambda)$,
where $K^{b}(\proj \Lambda)$ is the bounded homotopy category of the subcategory $\proj \Lambda$ of $\mod \Lambda$ consisting of projective modules.
For any $M\in\mod \Lambda$ and $m\geqslant 1$, we use $\Omega^{m}(M)$ to denote the $m$-th syzygy of $M$; in particular, $\Omega^{0}(M)=M$.

\begin{lemma}\label{lem3.13}
\begin{enumerate}
\item[]
\item[(1)] $\ell\ell^{t_{\mathcal{S}^{<\infty}}}(\Lambda)=0$ if and only if $\gldim \Lambda <\infty$;
\item[(2)] $\ell\ell^{t_{\mathcal{S}^{<\infty}}}(\Lambda)\neq 1$.
\end{enumerate}
\end{lemma}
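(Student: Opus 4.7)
For part (1), I will first unwind the definitions. By Definition \ref{def2.5}, $\ell\ell^{t_{\mathcal{S}^{<\infty}}}(\Lambda)=0$ is equivalent to $t_{\mathcal{S}^{<\infty}}(\Lambda)=0$, and Proposition \ref{prop3.1}(1) then rephrases this as $\Lambda\in\mathfrak{F}(\mathcal{S}^{<\infty})$. The crux of the argument is that every simple $\Lambda$-module $S$ occurs as a composition factor of $\Lambda$, because $S$ is the top of its projective cover $P_S$, which is an indecomposable direct summand of $\Lambda$. Consequently $\Lambda\in\mathfrak{F}(\mathcal{S}^{<\infty})$ forces $\mathcal{S}^{<\infty}=\mathcal{S}$, i.e.\ every simple has finite projective dimension, which is classically equivalent to $\gldim\Lambda<\infty$. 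The converse is immediate, since any composition series of $\Lambda$ witnesses membership in $\mathfrak{F}(\mathcal{S}^{<\infty})$.

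For part (2), I will argue by contradiction. Abbreviate $\mathcal{V}:=\mathcal{S}^{<\infty}$ and $\mathcal{V}':=\mathcal{S}^{\infty}$, and suppose $\ell\ell^{t_{\mathcal{V}}}(\Lambda)=1$. This unpacks into two assertions: $t_{\mathcal{V}}(\Lambda)\neq 0$, so by part (1) $\gldim\Lambda=\infty$ and hence $\mathcal{V}'\neq\varnothing$; and $t_{\mathcal{V}}(F_{t_{\mathcal{V}}}(\Lambda))=0$, which by Proposition \ref{prop3.1}(1) gives $F_{t_{\mathcal{V}}}(\Lambda)=\rad t_{\mathcal{V}}(\Lambda)\in\mathfrak{F}(\mathcal{V})$. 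Choose any $S\in\mathcal{V}'$, so $\pd S=\infty$, and let $P_S$ be its projective cover. Writing $\Lambda=P_S\oplus Q$ and using that the subfunctor $t_{\mathcal{V}}$ of the identity is additive, one gets $t_{\mathcal{V}}(\Lambda)=t_{\mathcal{V}}(P_S)\oplus t_{\mathcal{V}}(Q)$; and since $\top P_S=S\in\add\mathcal{V}'$, Proposition \ref{prop3.1}(3) yields $t_{\mathcal{V}}(P_S)=P_S$. Thus $P_S$ is a direct summand of $t_{\mathcal{V}}(\Lambda)$, and $\Omega S=\rad P_S$ is a direct summand, and in particular a submodule, of $\rad t_{\mathcal{V}}(\Lambda)$.

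To close the argument I invoke Lemma \ref{lem3.3}(1): since $\mathfrak{F}(\mathcal{V})$ is closed under submodules and contains $\rad t_{\mathcal{V}}(\Lambda)$, it follows that $\Omega S\in\mathfrak{F}(\mathcal{V})$. Because $\mathcal{V}$ is a finite set of simples each of finite projective dimension, $\pd\mathcal{V}<\infty$, so $\pd\Omega S\leqslant\pd\mathfrak{F}(\mathcal{V})\leqslant\pd\mathcal{V}<\infty$, which forces $\pd S\leqslant 1+\pd\mathcal{V}<\infty$, contradicting $S\in\mathcal{V}'$. The main obstacle is the direct-summand claim that $P_S$ sits inside $t_{\mathcal{V}}(\Lambda)$ as a summand rather than merely appearing in its top; once that is extracted from the decomposition $\Lambda=P_S\oplus Q$ together with additivity of $t_{\mathcal{V}}$ and Proposition \ref{prop3.1}(3), the rest of the proof is a clean combination of the closure properties of $\mathfrak{F}(\mathcal{V})$ from Lemma \ref{lem3.3}(1) and the finiteness of $\pd\mathcal{V}$.
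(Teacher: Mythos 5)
For part (1) your argument coincides with the paper's: both directions follow from Proposition \ref{prop3.1} and the observation that every simple module is a composition factor of $\Lambda$.

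For part (2) you take a genuinely different, and more self-contained, route. The paper deduces $\Omega^1(S)\in\mathfrak{F}(\mathcal{S}^{<\infty})$ by invoking the external inequality \cite[Lemma 6.3]{HLM2}, which bounds $\ell\ell^{t_{\mathcal{V}}}$ of a syzygy of $t_{\mathcal{V}}(M)$ by $\ell\ell^{t_{\mathcal{V}}}(\Lambda)-1$; applied to $M=S$ with $t_{\mathcal{V}}(S)=S$, this gives $\ell\ell^{t_{\mathcal{V}}}(\Omega^1(S))=0$ at once. You instead unwind $\ell\ell^{t_{\mathcal{V}}}(\Lambda)=1$ to get $\rad t_{\mathcal{V}}(\Lambda)\in\mathfrak{F}(\mathcal{V})$, split $\Lambda=P_S\oplus Q$, use additivity of the torsion radical together with Proposition \ref{prop3.1}(3) to see $P_S$ is a direct summand of $t_{\mathcal{V}}(\Lambda)$, hence $\Omega^1(S)=\rad P_S$ is a submodule of $\rad t_{\mathcal{V}}(\Lambda)$, and then conclude $\Omega^1(S)\in\mathfrak{F}(\mathcal{V})$ by closure under submodules (Lemma \ref{lem3.3}(1)). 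Both arguments then reach the same contradiction $\pd S<\infty$. Your version avoids the reference to \cite[Lemma 6.3]{HLM2} at the cost of a slightly longer, but entirely elementary, chain of reasoning; the paper's version is shorter but less transparent since it pushes the content into a cited lemma. Both proofs are correct.
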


\begin{proof}
(1) If $\ell\ell^{t_{\mathcal{S}^{<\infty}}}(\Lambda)=0$,
then $t_{\mathcal{S}^{<\infty}}(\Lambda)=0$. So $\Lambda\in \mathfrak{F}(\mathcal{S}^{<\infty})$ by Proposition \ref{prop3.1}(1),
which implies $\mathcal{S}^{<\infty}=\mathcal{S}$. Thus $\gldim \Lambda=\pd\mathcal{S}=\pd\mathcal{S}^{<\infty}<\infty$.
Conversely, if $\gldim \Lambda <\infty$, then $\mathcal{S}^{<\infty}=\mathcal{S}$ and the torsion pair
$(\mathcal{T}_{\mathcal{S}^{<\infty}}, \mathfrak{F}(\mathcal{S}^{<\infty}))=(\mathcal{T}_{\mathcal{S}}, \mathfrak{F}(\mathcal{S}))
=(0,\mod \Lambda)$. By Proposition \ref{prop3.1}(2), for any $M\in \mod \Lambda$ we have $t_{\mathcal{S}^{<\infty}}(M)=0$ and
$\ell\ell^{t_{\mathcal{S}^{<\infty}}}(\Lambda)=0$.

(2) Suppose $\ell\ell^{t_{\mathcal{S}^{<\infty}}}(\Lambda)=1$. Then by (1), we have $\gldim \Lambda=\infty$
and there exists a simple module $S$ in $\mod \Lambda$
such that $\pd S=\infty$. Consider the following exact sequence
$$0 \longrightarrow \Omega^1(S) \longrightarrow P \longrightarrow S \longrightarrow 0,$$
in $\mod \Lambda$ with $P$ the projective cover of $S$.
Because $\top S=S\in \add \mathcal{S}^{\infty}$, we have $t_{\mathcal{S}^{<\infty}}(S)=S$ by
Proposition \ref{prop3.1}(3). 
It follows from \cite[Lemma 6.3]{HLM2} that
$$\ell\ell^{t_{\mathcal{S}^{<\infty}}}(\Omega^1(S))=\ell\ell^{t_{\mathcal{S}^{<\infty}}}(\Omega^1(t_{\mathcal{S}^{<\infty}}(S)))
\leqslant \ell\ell^{t_{\mathcal{S}^{<\infty}}}(\Lambda)-1=0,$$
that is, $\ell\ell^{t_{\mathcal{S}^{<\infty}}}(\Omega^1(S))=0$, and $\Omega^1(S)\in \mathfrak{F}(\mathcal{S}^{<\infty})$,
which induces $\pd \Omega^1(S) <\infty$, a contradiction.
\end{proof}

In the following result, we give an upper bound for $\dim D_{sg}^{b}(\mod \Lambda)$.

\begin{theorem}\label{thm3.14}
Let $\mathcal{V}$ be a subset of $\mathcal{S}^{<\infty}$ with $\ell\ell^{t_{\mathcal{V}}}(\Lambda)=n$. Then we have
\begin{equation*}
\dim D_{sg}^{b}(\mod \Lambda)
\leqslant \max\{0, n-2\}.
\end{equation*}
\end{theorem}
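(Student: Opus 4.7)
The plan is to run the iteration from the proof of Theorem \ref{thm3.8} inside the singularity category, where the hypothesis $\mathcal{V}\subseteq\mathcal{S}^{<\infty}$ has the following pleasant consequence: any complex with all components in $\mathfrak{F}(\mathcal{V})$ has, by Lemma \ref{lem3.3}(1), cohomology in $\mathfrak{F}(\mathcal{V})$ and hence projective dimension at most $\pd\mathcal{V}<\infty$, so Lemma \ref{lem2.3} places it in $\langle\Lambda\rangle_{\pd\mathcal{V}+1}$ and it becomes zero in $D_{sg}^{b}(\mod\Lambda)$. In particular $q_{t_{\mathcal{V}}}(X)$ vanishes in $D_{sg}^{b}$, so $X\cong t_{\mathcal{V}}(X)$ there, and the triangle $F_{t_{\mathcal{V}}}(X)\to t_{\mathcal{V}}(X)\to\top t_{\mathcal{V}}(X)\to$ together with Lemma \ref{lem3.5} gives $\langle X\rangle\subseteq\langle F_{t_{\mathcal{V}}}(X)\rangle\diamond\langle\Lambda/\rad\Lambda\rangle_{1}$ in $D_{sg}^{b}$. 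Iterating this $n$ times and using that $F_{t_{\mathcal{V}}}^{n}(X)$ has all components in $\mathfrak{F}(\mathcal{V})$ by Lemma \ref{lem2.6} (hence is zero in $D_{sg}^{b}$) yields $\langle X\rangle\subseteq\langle\Lambda/\rad\Lambda\rangle_{n}$ and the naive bound $\dim D_{sg}^{b}(\mod\Lambda)\leqslant n-1$; the content of the theorem is the saving of one further iteration.

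I would dispose of the degenerate cases directly. When $n=0$, Proposition \ref{prop3.1}(1) forces $\Lambda\in\mathfrak{F}(\mathcal{V})$, so every simple $\Lambda$-module belongs to $\mathcal{V}\subseteq\mathcal{S}^{<\infty}$ and $\gldim\Lambda<\infty$, giving $D_{sg}^{b}(\mod\Lambda)=0$. When $n=1$, the iteration above immediately produces $\langle X\rangle\subseteq\langle\Lambda/\rad\Lambda\rangle_{1}$, hence dimension $0$. Both cases match $\max\{0,n-2\}=0$.

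For $n\geqslant 2$, I would combine the iteration with a syzygy trick after reducing to modules. The reduction uses that every bounded complex $X$ is isomorphic in $D_{sg}^{b}$ to a shifted module $M[N]$: taking a bounded-above projective resolution $P^{\bullet}\to X$ and choosing $N$ larger than the cohomological span of $X$, the brutal truncation $\sigma_{\leqslant-N}P^{\bullet}$ has cohomology concentrated in degree $-N$ (the $N$-th syzygy $M$ of $X$), while the complementary truncation is a bounded complex of projectives and hence zero in $D_{sg}^{b}$. It then suffices to show $M\in\langle\Lambda/\rad\Lambda\rangle_{n-1}$. The perfection principle gives $M\cong t_{\mathcal{V}}(M)$ in $D_{sg}^{b}$, and the projective cover triangle $\Omega t_{\mathcal{V}}(M)\to P\to t_{\mathcal{V}}(M)\to$ gives $t_{\mathcal{V}}(M)\cong\Omega t_{\mathcal{V}}(M)[1]$. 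The key estimate, already invoked in the proof of Lemma \ref{lem3.13}(2) via \cite[Lemma 6.3]{HLM2}, is $\ell\ell^{t_{\mathcal{V}}}(\Omega t_{\mathcal{V}}(M))\leqslant n-1$; running the iteration of the first paragraph on the complex $\Omega t_{\mathcal{V}}(M)$ therefore terminates in $n-1$ steps and yields $\Omega t_{\mathcal{V}}(M)\in\langle\Lambda/\rad\Lambda\rangle_{n-1}$. Consequently $X\cong\Omega t_{\mathcal{V}}(M)[N+1]\in\langle\Lambda/\rad\Lambda\rangle_{n-1}$ in $D_{sg}^{b}$, so $\dim D_{sg}^{b}(\mod\Lambda)\leqslant n-2$.

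The main technical obstacle I anticipate is the complex-to-module reduction in $D_{sg}^{b}$. Although the truncation argument is standard, one must carefully verify that the complementary brutal truncation of the projective resolution really lies in $K^{b}(\proj\Lambda)$ and that the retained truncation does represent the claimed syzygy module. The remaining ingredients — vanishing of $\mathfrak{F}(\mathcal{V})$-supported complexes in $D_{sg}^{b}$, the iteration mechanics of Theorem \ref{thm3.8}, and the syzygy--layer-length estimate from \cite{HLM2} — then combine essentially mechanically.
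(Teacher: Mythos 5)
Your proof is correct, and it takes a genuinely different route from the paper's. The paper stays in $\mod\Lambda$ as long as possible: it applies the horseshoe lemma to the two short exact sequences $0\to t_{\mathcal{V}}(Y)\to Y\to q_{t_{\mathcal{V}}}(Y)\to 0$ and $0\to F_{t_{\mathcal{V}}}(Y)\to t_{\mathcal{V}}(Y)\to\top t_{\mathcal{V}}(Y)\to 0$ to produce an iteration at the level of $(a+1)$-th syzygies $\Omega^{a+1}(-)$ where $a=\pd\mathcal{V}$, obtaining $\langle\Omega^{a+1}(Y)\rangle\subseteq\langle\Omega^{a+1}(F_{t_{\mathcal{V}}}^{n-1}(Y))\rangle\diamond\langle\Omega^{a+1}(\Lambda/\rad\Lambda)\rangle_{n-1}$, and only then passes to $D_{sg}^{b}$ via $X\cong\Omega^{a+2}(X)[a+2]$. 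You instead pass to $D_{sg}^{b}$ at the outset, observe that $\mathfrak{F}(\mathcal{V})$-supported objects vanish there, and so avoid the horseshoe lemma and all bookkeeping of $\Omega^{a+1}$ entirely; the projective-cover triangle $t_{\mathcal{V}}(M)\cong\Omega t_{\mathcal{V}}(M)[1]$ plays the role of the paper's shift to $\Omega^{a+2}$. Both arguments hinge on exactly the same two external ingredients --- reduction to a stalk complex in $D_{sg}^{b}$ (your brutal-truncation argument is what \cite[Lemma~2.4(2)]{DT15U} packages) and the syzygy--layer-length inequality $\ell\ell^{t_{\mathcal{V}}}(\Omega^{1}t_{\mathcal{V}}(M))\leqslant n-1$ from \cite[Lemma~6.3]{HLM2}, which is where the one-step saving over the naive bound $n-1$ comes from. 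A further small gain of your route is the case $n=1$: your iteration already gives $D_{sg}^{b}(\mod\Lambda)=\langle\Lambda/\rad\Lambda\rangle_{1}$ directly, whereas the paper instead invokes \cite[Proposition~5.10]{HLM2} together with Lemma~\ref{lem3.13}(2) to deduce $\gldim\Lambda<\infty$. Two small presentational points: your $\sigma_{\leqslant -N}P^{\bullet}$ has $H^{-N}=\Coker(d^{-N-1})$, which is the cosyzygy-like image $\Im(d^{-N})$ rather than the kernel; this is still a stalk complex of a module, so the reduction is fine, but the module is not literally the classical syzygy of $X$ when $X$ is a genuine complex, and the paper's citation of \cite[Lemma~2.4(2)(a)]{DT15U} sidesteps having to spell this out.
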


\begin{proof}
If $\mathcal{V}=\varnothing$, then $\ell\ell^{t_{\mathcal{V}}}(\Lambda)=\LL(\Lambda)$
by Proposition \ref{prop3.2}. Now the assertion follows from Theorem \ref{thm1.1}(3).

Now suppose $\mathcal{V}\neq\varnothing$.
If $n\leqslant 1$, then $\ell\ell^{t_{\mathcal{S}^{<\infty}}}(\Lambda)\leqslant 1$ by \cite[Proposition 5.10]{HLM2}.
So $\ell\ell^{t_{\mathcal{S}^{<\infty}}}(\Lambda)=0$
and $\gldim \Lambda<\infty$ by Lemma \ref{lem3.13}, which implies $\dim D_{sg}^{b}(\mod \Lambda)=0$.

Let $n\geqslant 2$ and set $a:=\pd\mathcal{V}$.
From \cite[Lemma 2.4(2)(a)]{DT15U}, we know that every object in $ D_{sg}^{b}(\mod \Lambda)$
is isomorphic to a stalk complex for some module. Let $X\in \mod \Lambda$.
If $\ell\ell^{t_{\mathcal{V}}}(X)=0$, then $\pd X<\infty$ and $X=0$ in $D_{sg}^{b}(\mod \Lambda)$.
If $\ell\ell^{t_{\mathcal{V}}}(X)>0$, then by \cite[Lemma 6.3]{HLM2}, we have
$\ell\ell^{t_{\mathcal{V}}}(\Omega^{1} (t_{\mathcal{V}}(X)))\leqslant \ell\ell^{t_{\mathcal{V}}}(\Lambda)-1=n-1$. By Lemma \ref{lem2.6}, we have
$\ell\ell^{t_{\mathcal{V}}}(F_{t_{\mathcal{V}}}^{n-1}(\Omega^{1} (t_{\mathcal{V}}(X))))=0$.
By Proposition \ref{prop3.1}(1), we have $F_{t_{\mathcal{V}}}^{n-1}(\Omega^{1} (t_{\mathcal{V}}(X)))\in\mathfrak{F}(\mathcal{V})$
and $\pd F_{t_{\mathcal{V}}}^{n-1}(\Omega^{1} (t_{\mathcal{V}}(X)))\leqslant a$.

For any $Y\in \mod \Lambda$, we have the following two exact sequences
$$0 \longrightarrow t_{\mathcal{V}}(Y)\longrightarrow Y\longrightarrow q_{t_{\mathcal{V}}}(Y)\longrightarrow 0, $$
$$0 \longrightarrow F_{t_{\mathcal{V}}}(Y)\longrightarrow t_{\mathcal{V}}(Y)\longrightarrow \top t_{\mathcal{V}}(Y)\longrightarrow 0.$$
Since $q_{t_{\mathcal{V}}}(Y)\in\mathfrak{F}(\mathcal{V})$, we have $\pd q_{t_{\mathcal{V}}}(Y)\leqslant a$.
By the horseshoe lemma, we have
$$\Omega^{a+1}(Y)\cong \Omega^{a+1}(t_{\mathcal{V}}(Y)),$$
$$0 \rightarrow \Omega^{a+1}(F_{t_{\mathcal{V}}}(Y))\rightarrow \Omega^{a+1}(t_{\mathcal{V}}(Y))\oplus P_{1}\rightarrow
\Omega^{a+1}(\top t_{\mathcal{V}}(Y)) \rightarrow 0,$$
where $P_{1}$ is projective in $\mod \Lambda$.
Thus we have
\begin{align*}
\langle\Omega^{a+1}(Y)\rangle=\langle \Omega^{a+1}(t_{\mathcal{V}}(Y))\rangle
&\subseteq \langle \Omega^{a+1}(F_{t_{\mathcal{V}}}(Y))\rangle \diamond \langle \Omega^{a+1}(\top t_{\mathcal{V}}(Y))\rangle\\
&\subseteq \langle \Omega^{a+1}(F_{t_{\mathcal{V}}}(Y))\rangle \diamond \langle \Omega^{a+1}(\Lambda/\rad \Lambda)\rangle.
\end{align*}
By replacing $Y$ with $F^i_{t_{\mathcal{V}}}(Y)$ for any $1\leqslant i\leqslant n-2$, we get
$$
\langle\Omega^{a+1}(Y)\rangle\subseteq \langle \Omega^{a+1}(F^{n-1}_{t_{\mathcal{V}}}(Y))\rangle \diamond \langle \Omega^{a+1}(\Lambda/\rad \Lambda)\rangle_{n-1}.
$$
Let $Y=\Omega^{1} (t_{\mathcal{V}}(X))$.
Since $\pd F_{t_{\mathcal{V}}}^{n-1}(\Omega^{1} (t_{\mathcal{V}}(X)))\leqslant a$, we have
$$\Omega^{a+1}(F_{t_{\mathcal{V}}}^{n-1}(\Omega^{1} (t_{\mathcal{V}}(X))))=0,$$
and so $$
\langle\Omega^{a+2}(t_{\mathcal{V}}(X))\rangle\subseteq \langle \Omega^{a+1}(\Lambda/\rad \Lambda)\rangle_{n-1}.
$$
By \cite[Lemma 2.4(2)(b)]{DT15U}, we have $X\cong \Omega^{a+2}(X)[a+2]$ in $D_{sg}^{b}(\mod \Lambda)$. Thus
$$X\cong \Omega^{a+2}(X)[a+2]\cong\Omega^{a+2}(t_{\mathcal{V}}(X))[a+2]\in \langle \Omega^{a+1}(\Lambda/\rad \Lambda)\rangle_{n-1}.$$
It follows that $D_{sg}^{b}(\mod \Lambda)=\langle \Omega^{a+1}(\Lambda/\rad\Lambda)\rangle_{n-1}$ and
$\dim D_{sg}^{b}(\mod \Lambda) \leqslant n-2$.
\end{proof}

The following corollary is an immediate consequence of Theorem \ref{thm3.14}. It is trivial that
$\ell\ell^{t_{\mathcal{S}^{<\infty}}}(\Lambda)\leqslant\LL (\Lambda)$, so this corollary improves Theorem \ref{thm1.1}(3).

\begin{corollary}\label{cor3.15}
If $\ell\ell^{t_{\mathcal{S}^{<\infty}}}(\Lambda)=n$, then we have
\begin{equation*}
\dim D_{sg}^{b}(\mod \Lambda)
\leqslant \max\{0, n-2\}.
\end{equation*}
\end{corollary}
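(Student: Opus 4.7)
The plan is to deduce this as a direct specialization of Theorem \ref{thm3.14}. Setting $\mathcal{V} := \mathcal{S}^{<\infty}$ yields a subset of $\mathcal{S}^{<\infty}$ (namely itself), so the hypothesis of Theorem \ref{thm3.14} is trivially met with this choice, and the theorem's numerical hypothesis $\ell\ell^{t_{\mathcal{V}}}(\Lambda) = n$ becomes exactly the hypothesis $\ell\ell^{t_{\mathcal{S}^{<\infty}}}(\Lambda) = n$ of the corollary. The bound
$$\dim D_{sg}^{b}(\mod \Lambda) \leqslant \max\{0, n-2\}$$
then follows immediately from the conclusion of Theorem \ref{thm3.14} with no further argument.

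The remark preceding the corollary — that $\ell\ell^{t_{\mathcal{S}^{<\infty}}}(\Lambda) \leqslant \LL(\Lambda)$ — is not needed for the proof itself but justifies calling this an improvement of Theorem \ref{thm1.1}(3): since $F_{t_{\mathcal{S}^{<\infty}}} = \rad \circ\, t_{\mathcal{S}^{<\infty}}$ is a subfunctor of $\rad$, iterating it kills $\Lambda$ no later than iterating $\rad$ does, and hence the bound here is at most $\LL(\Lambda) - 2$. There is therefore no real obstacle: all the substantive work — the reduction to stalk complexes via \cite[Lemma 2.4]{DT15U}, the iteration of $F_{t_{\mathcal{V}}}$ through Lemma \ref{lem2.6}, and the horseshoe-lemma step producing the diamond-product chain $\langle \Omega^{a+1}(\Lambda/\rad\Lambda)\rangle_{n-1}$ — has already been done inside the proof of Theorem \ref{thm3.14}, and the corollary is a one-line invocation of that result.
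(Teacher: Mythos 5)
Your proof is correct and is exactly the paper's argument: Corollary \ref{cor3.15} is the specialization $\mathcal{V}=\mathcal{S}^{<\infty}$ of Theorem \ref{thm3.14}, and the paper likewise states it as an immediate consequence. Your side remark justifying the comparison with $\LL(\Lambda)$ also matches the paper's reasoning.
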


Now we explain why Theorem \ref{thm1.1} is a special case of our results.

\begin{remark}\label{rem3.16}
\begin{enumerate}
\item[]
\end{enumerate}
{\rm (1) If $\mathcal{V}=\varnothing$, then $\ell\ell^{t_{\mathcal{V}}}(\Lambda)=\LL(\Lambda)$ by Proposition \ref{prop3.2}.
Since $c =\min\{\pd \mathcal{V}, \id \mathcal{V}\}=-1$, by Theorem \ref{thm3.12} we have
$$\dim D^{b}(\mod \Lambda) \leqslant (c+2)(n+1)-2=(-1+2)(\LL(\Lambda)+1)-2=\LL(\Lambda)-1.$$
This is Theorem \ref{thm1.1}(1).

By Theorem \ref{thm3.14}, we have
\begin{equation*}
\dim D_{sg}^{b}(\mod \Lambda)\leqslant \max\{0, \LL(\Lambda)-2\}.
\end{equation*}
This is Theorem \ref{thm1.1}(3).

(2) If $\mathcal{V}=\mathcal{S}^{<\infty}=\mathcal{S}$, then the torsion pair $(\mathcal{T}_{\mathcal{V}}, \mathfrak{F}(\mathcal{V}))=(0,\mod \Lambda)$.
By Proposition \ref{prop3.1}(2), for any $M\in \mod \Lambda$ we have $t_{\mathcal{V}}(M)=0$ and
$\ell\ell^{t_{\mathcal{V}}}(\Lambda)=0$. Because $c=\min\{\pd \mathcal{V}, \id \mathcal{V}\}=\gldim \Lambda<\infty$,
by Theorem \ref{thm3.12} we have
$$\dim D^{b}(\mod \Lambda) \leqslant (c+2)(\ell\ell^{t_{\mathcal{V}}}(\Lambda)+1)-2=(\gldim \Lambda+2)(0+1)-2=\gldim \Lambda.$$
This is Theorem \ref{thm1.1}(2). In addition, since $\gldim \Lambda<\infty$, we have $\dim D^{b}_{sg}(\mod \Lambda)=0$.}
\end{remark}

\section{Examples}

By choosing some suitable $\mathcal{V}$ and applying Theorems \ref{thm3.12} and \ref{thm3.14},
we may obtain more precise upper bounds for $\dim D^{b}(\mod \Lambda)$ and $\dim D^{b}_{sg}(\mod \Lambda)$
than that in Theorem \ref{thm1.1}. We give two examples to illustrate this.
The global dimension of the algebra in the first example is infinite and
that in the second one is finite.

\begin{example}\label{exa4.1}
{\rm Consider the bound quiver algebra $\Lambda=kQ/I$, where $k$ is an algebraically closed field and $Q$
is given by
$$\xymatrix{
&1 \ar@(l,u)^{\alpha_{1}}\ar[r]^{\alpha_{2}}  \ar[ld]_{\alpha_{m+1}}\ar[rd]^{\alpha_{m+2}}
&2\ar[r]^{\alpha_{3}}&{3}\ar[r]^{\alpha_{4}}  &{4}\ar[r]^{\alpha_{5}}&\cdots\ar[r]^{\alpha_{m}}&m\\
m+1&&m+2&&&&
}$$
and $I$ is generated by
$\{\alpha_{1}^{2},\alpha_{1}\alpha_{m+1},\alpha_{1}\alpha_{m+2},\alpha_{1}\alpha_{2},
\alpha_{2}\alpha_{3}\cdots\alpha_{m}\}$ with $m\geq 10$.
Then the indecomposable projective $\Lambda$-modules are
$$\xymatrix@-1.0pc@C=0.1pt
{ &  &  &1\edge[lld]\edge[ld]\edge[d]\edge[dr]
&  && 2\edge[d] &&&& & &&&&  &&&&  &\\
&1& m+1&m+2&2\edge[d] && 3\edge[d] && 3\edge[d] &&&& &&&& &\\
P(1)=&  &  &    &3\edge[d] &P(2)=&4\edge[d]  &P(3)=&4\edge[d] &P(m+1)=m+1,&P(m+2)=m+2&\\
&  &  &  &\vdots\edge[d]&&\vdots\edge[d]&&\vdots\edge[d]&& &&&& &\\
&  &  &  & \;m-1, &&\;m, && \;m,  &&&& &&&\\
&  &  &  &  &&&&  & &&&& & &&&& &&&\\
}$$
and $P(i+1)=\rad P(i)$ for any $2 \leqslant i\leqslant m-1$; and
the indecomposable injective $\Lambda$-modules are
$$ \xymatrix@-1.0pc@C=0.1pt
{& & 2\edge[d] &&&& & 1\edge[d] &&&&& &&&&  & &&&& &  \\
& &  3\edge[d] &&&& & 2\edge[d] &&&& & 1\edge[d]&&&&  &1\edge[d]&&&& &1\edge[d]\\
&  I(m)=&\vdots\edge[d]  &&&&I(m-1)= & \vdots\edge[d]  &&&& I(1)=&\;1, &&&&I(m+1)=&\;m+1,&&&&I(m+2)=&m+2 \\
&  &  \;m,  & &&&& \;9,  & &&&&  &   &&&&  &  &&&&   &\\
&  &   &   &&&&    &   &&&&   &   &&&&   &   &&&&  &\\}$$
and $I(i)=I(i+1)/\soc I(i+1)$ for any $2 \leqslant i\leqslant m-2$.

We have
\begin{equation*}
\pd S(i)=
\begin{cases}
\infty, &\text{if}\;\;i=1;\\
1,&\text{if} \;\;2 \leqslant  i\leqslant m-1;\\
0,&\text{if}\;\; m \leqslant  i\leqslant m+2.
\end{cases}
\end{equation*}
So $\mathcal{S}^{\infty}=\{S(1)\}$ and $\mathcal{S}^{<\infty}=\{ S(i)\mid 2\leqslant i\leqslant m+2\}$.
We also have \begin{equation*}
\id S(i)=
\begin{cases}
\infty, &\text{if}\;\;i=1,2,m,m+1,m+2;\\
1,&\text{if} \;\;3\leqslant  i\leqslant m-1.
\end{cases}
\end{equation*}
Let $\mathcal{V}:=\{S(i)\mid 3\leqslant i \leqslant m-1\}\subseteq\mathcal{S}^{<\infty}$. Then
$$a:=\pd\mathcal{S}=1,\;c:=\id\mathcal{S}=1\; {\rm and}\; d:=\min\{ a, c\}=1.$$
Let $\mathcal{V}'$ be all the others simple modules in $\mod \Lambda$, that is,
$\mathcal{V}'=\{ S(1),S(2),S(m),S(m+1),S(m+2)\}$.
By \cite[Lemma 3.4(a)]{HLM2} and $\Lambda=\oplus_{i=1}^{m+2}P(i)$, we have
$$\ell\ell^{t_{\mathcal{V}}}(\Lambda)=\max\{\ell\ell^{t_{\mathcal{V}}}(P(i)) \;|\; 1 \leqslant i  \leqslant m+2\}.$$

In order to compute $\ell\ell^{t_{\mathcal{V}}}(P(1))$, we need to find the least non-negative integer $i$
such that $t_{\mathcal{V}}F_{t_{\mathcal{V}}}^{i}(P(1))=0$.
Since $\top P(1)=S(1)\in \add \mathcal{V}'$, we have $t_{\mathcal{V}}(P(1))=P(1)$ by Proposition \ref{prop3.1}(3).
Thus
$$F_{t_{\mathcal{V}}}(P(1))=\rad t_{\mathcal{V}}(P(1))=\rad (P(1))=S(1)\oplus S(m+1)\oplus S(m+2)\oplus T, $$
\xymatrix@-1.0pc@C=0.05pt {
& 2\edge[d]             \\
{\rm where}  \;\;\;T=  &3\edge[d]\\
&\vdots\edge[d] \\
& \;m-1.}

Since $\top S(1)=S(1)\in \add \mathcal{V}'$, we have $t_{\mathcal{V}}(S(1))=S(1)$ by Proposition \ref{prop3.1}(3).
Similarly, $t_{\mathcal{V}}(S(m+1))=S(m+1)$, $t_{\mathcal{V}}(S(m+2))=S(m+2)$ and $t_{\mathcal{V}}(T)=T$.
So $$t_{\mathcal{V}}F_{t_{\mathcal{V}}}(P(1))=t_{\mathcal{V}}(S(1)\oplus S(m+1)\oplus S(m+2)\oplus T)
=S(1)\oplus S(m+1)\oplus S(m+2)\oplus T,$$
and hence $$F_{t_{\mathcal{V}}}^{2}(P(1))=\rad t_{\mathcal{V}}F_{t_{\mathcal{V}}}(P(1))
=\rad(S(1)\oplus S(m+1)\oplus S(m+2)\oplus T)=\rad T.$$
It is easy to see that $\rad T\in \mathfrak{F}(\mathcal{V})$, so $t_{\mathcal{V}}(\rad T)=0$ by Proposition \ref{prop3.1}(1).
Moreover, $t_{\mathcal{V}}F_{t_{\mathcal{V}}}^{2}(P(1))=0$. It follows that $\ell\ell^{t_{\mathcal{V}}}(P(1))=2$.
Similarly, we have
\begin{equation*}
\ell\ell^{t_{\mathcal{V}}}(P(i))=
\begin{cases}
2, &\text{if}\;\;i=2;\\
1,&\text{if} \;\;3\leqslant  i\leqslant m+2.
\end{cases}
\end{equation*}
Thus $n:=\ell\ell^{t_{\mathcal{V}}}(\Lambda)=\max\{\ell\ell^{t_{\mathcal{V}}}(P(i)) \;|\; 1 \leqslant i  \leqslant m+2\}=2$.

(1) Because $\LL(\Lambda)=m-1$, we have
$$\dim D^{b}(\mod \Lambda) \leqslant\LL(\Lambda)-1=m-2$$ by Theorem \ref{thm1.1}(1). In particular,
from Theorem \ref{thm1.1}(2), we can not get an upper bound for $\dim D^{b}(\mod \Lambda)$.
By Theorem \ref{thm1.1}(3), we have
$$\dim D_{sg}^{b}(\mod \Lambda)\leqslant \LL(\Lambda)-2=m-3.$$

(2) By Theorem \ref{thm3.12}, we have
$$\dim D^{b}(\mod \Lambda) \leqslant (d+2)(n+1)-2=7.$$
By Theorem \ref{thm3.14}, we have
$$\dim D_{sg}^{b}(\mod \Lambda)=0.$$}
\end{example}

\begin{example}\label{exa4.2}
{\rm Consider the bound quiver algebra $\Lambda=kQ/I$, where $k$ is an algebraically closed field and $Q$
is given by
$$\xymatrix{
&1\ar[r]^{\alpha_{1}} \ar[d]^{\alpha_{m+1}} &2\ar[r]^{\alpha_{2}} &{3}\ar[r]^{\alpha_{3}}&\cdots \ar[r]^{\alpha_{m-1}}&{m}\\
&m+1\ar[r]^{\alpha_{m+2}}
&m+2\ar[r]^{\alpha_{m+3}}&m+3\ar[r]^{\alpha_{m+4}}&\cdots\ar[r]^(0.4){\alpha_{2m-1}}&2m-1
}$$
and $I$ is generated by
$\{\alpha_{i}\alpha_{i+1}\;| \;m+1\leqslant i\leqslant 2m-2\}$ with $m\geqslant 9$.
Then the indecomposable projective $\Lambda$-modules are
$$\xymatrix@-1.0pc@C=0.1pt
{&&  1\edge[d]\edge[ld] &&&& & 2\edge[d] &&&& & &&&& &&&& &\\
&m+1  &2\edge[d] &&&& &3\edge[d]  &&&& & 3\edge[d]  &&&&&j\edge[d] &&&& &\\
P(1)=& &3\edge[d] &&&&P(2)=&4\edge[d] &&&&P(3)=&4\edge[d]&&&&P(j)=&\;j+1,&&&&P(2m-1)=2m-1,&\\
&  &\vdots\edge[d]&&&& &\vdots\edge[d]&&&& &\vdots\edge[d]&&&& &&&& &\\
&  &\;m,  &&&& &\;m, &&&& &\;m, &&&&&&&& &\\
&  &  &&& & &&&&  &  &&&& &&&& &\\
}$$
where $m+1 \leqslant j\leqslant 2m-2$ and $P(i+1)=\rad P(i)$ for any $2 \leqslant i\leqslant m-1$; and
the indecomposable injective $\Lambda$-modules are
$$ \xymatrix@-1.0pc@C=0.1pt
{& & 1\edge[d]&&&& & 1\edge[d] &&&&  &  &&&&  &  &&&&  &  \\
&  &  2\edge[d] &&&&  & 2\edge[d] &&&& & 1\edge[d]&&&& &j-1\edge[d]&&&&  &\\
&  I(m)=&\vdots\edge[d]  &&&&I(m-1)= & \vdots\edge[d]  &&&& I(m+1)=&\;m+1, &&&&I(j)=&\;j, &&&&&\\
&  &  \;m,  & &&&& \;m-1,  &  &&&& & &&&& & &&&& &\\
&  &  &  &&&& & &&&&  &  &&&& & &&&&  &\\}$$
where $m+2\leqslant j\leqslant 2m-1$ and $I(i)=I(i+1)/\soc I(i+1)$ for any $1 \leqslant i\leqslant m-1$.

We have
\begin{equation*}
\pd S(i)=
\begin{cases}
m-1, &\text{if}\;\;i=1;\\
1,&\text{if} \;\;2 \leqslant  i\leqslant m-1;\\
0,&\text{if} \;\; i=m;\\
2m-1-i,&\text{if}\;\; m+1 \leqslant  i\leqslant 2m-1,
\end{cases}
\end{equation*}
and $\mathcal{S}^{<\infty}=\mathcal{S}$. We also have
\begin{equation*}
\id S(i)=
\begin{cases}
0, &\text{if}\;\;i=1;\\
1,&\text{if} \;\;2\leqslant  i\leqslant m;\\
i-m,&\text{if} \;\;m+1\leqslant  i\leqslant  2m-1.
\end{cases}
\end{equation*}
Let $\mathcal{V}:=\{S(i)\mid 2\leqslant i \leqslant m\}\subseteq\mathcal{S}^{<\infty}$. Then
$$a:=\pd\mathcal{V}=1,\;c:=\id\mathcal{V}=1\; {\rm and}\; d:=\min\{ a, c\}=1.$$
Let $\mathcal{V}'$ be all the others simple modules in $\mod \Lambda$, that is,
$\mathcal{V}'=\{S(i)\mid i=1\ \text{or}\ m+1 \leqslant i\leqslant 2m-1\}$.
Similar to the computation in Example \ref{exa4.1}, we have $n:=\ell\ell^{t_{\mathcal{V}}}(\Lambda)=2$.

(1) Because $\LL(\Lambda)=m$, we have
$$\dim D^{b}(\mod \Lambda) \leqslant\LL(\Lambda)-1=m-1$$ by Theorem \ref{thm1.1}(1).
Because $\gldim \Lambda=m-1$, we also have
$$\dim D^{b}(\mod \Lambda) \leqslant\gldim \Lambda=m-1$$ by Theorem \ref{thm1.1}(2). In addition, we have
$$\dim D^b_{sg}(\mod \Lambda) \leqslant\LL(\Lambda)-2=m-2$$ by Theorem \ref{thm1.1}(3).

(2) By Theorem \ref{thm3.12}, we have
$$\dim D^{b}(\mod \Lambda) \leqslant (d+2)(n+1)-2=7.$$ By Theorem \ref{thm3.14}, we have
$$\dim D_{sg}^b(\mod \Lambda)=0.$$}
\end{example}

In the above two examples, the upper bounds in (2) are smaller than that in (1) and the difference
between them may be arbitrarily large.

\vspace{0.5cm}

{\bf Acknowledgements.}
This research was partially supported by National Natural Science Foundation of China
(Grant Nos. 11971225, 11571164) and a Project
Funded by the Priority Academic Program Development of Jiangsu Higher Education
Institutions. The authors would like to thank Dong Yang for his helpful discussions,
and thank the referees for very useful and detailed suggestions.

\end{document}